\newtheorem{definition}{Definition}
\newtheorem{theorem}{Theorem}
\newtheorem{lemma}{Lemma}
\newtheorem{corollary}{Corollary}
\newtheorem{property}{Property}
\newtheorem{problem}{Problem}
\newcommand{\OR}[1]{$#1$\textnormal{-OR-PCG}}
\newcommand{\AND}[1]{$#1$\textnormal{-AND-PCG}}
\newcommand{\Int}[1]{$#1$\textnormal{-interval-PCG}}
\newcommand{\ORg}{\textnormal{OR-PCG}}
\newcommand{\ANDg}{\textnormal{AND-PCG}}
\newcommand{\Intg}{\textnormal{multi-interval-PCG}}
\newcommand{\I}{\mathcal{I}}
\newcommand{\J}{\mathcal{J}}
\author[Tiziana Calamoneri et al.]{Tiziana Calamoneri\affiliationmark{1}\thanks{I am  supported by Sapienza University of Rome, grants RM122181612C08BB and RM123188D7F7985D.} \and Manuel Lafond\affiliationmark{2} \\ 
  \and Angelo Monti\affiliationmark{1}\thanks{I am  supported by Sapienza University of Rome, grants RM122181612C08BB and RM123188D7F7985D.}  \and  Blerina Sinaimeri\affiliationmark{3}\thanks{I am supported by MUR PRIN Project EXPAND, grant number 2022TS4Y3N.}}
\title[On Generalizations of PCGs]{On Generalizations of Pairwise Compatibility Graphs}
\affiliation{
  % one line per affiliation, no postal codes, grant numbers or similar
  Computer Science Department, Sapienza University of Rome, Italy\\
  Universit\'e de Sherbrooke, Canada\\
  Luiss University, Rome, Italy}
\keywords{pairwise compatibility graphs, multi-interval-PCG, covering number, intersection dimension.}
\begin{document}
% This is only used if you are compiling for a volume before vol 25
% \publicationdetails{VOL}{2015}{ISS}{NUM}{SUBM}
% This is the new form of collecting the data, starting with vol 25
\publicationdata{vol. 26:3}{2024}{10}{10.46298/dmtcs.12295}{2023-09-19; 2023-09-19; 2024-04-16}{2024-09-10}
\maketitle
\begin{abstract}
A graph $G$ is a pairwise compatibility graph (PCG) if there exists an edge-weighted  tree and an interval $I$, such that each leaf of the tree is a vertex of the graph, and there is an edge $\{ x, y \}$ in $G$ if and only if the weight of the path in the tree connecting $x$ and $y$ lies within the interval $I$. Originating in phylogenetics, PCGs are closely connected to important graph classes like leaf-powers and multi-threshold graphs, widely applied in bioinformatics, especially in understanding evolutionary processes. 

In this paper we introduce two natural generalizations of the PCG class, namely \OR{k}s and \AND{k}s, which are the classes of graphs that can be expressed as union and intersection, respectively, of $k$ PCGs. These classes can be also described using the concepts of the \emph{covering number} and the \emph{intersection dimension} of a graph in relation to the PCG class.  We investigate how the classes of \ORg\ and \ANDg\  are related to PCGs, \Int{k}s and  other graph classes known in the literature. In particular,  we provide upper bounds on the minimum $k$ for which an arbitrary graph $G$ belongs to \Int{k}, \OR{k}\ or \AND{k}\ classes. For  particular graph classes we improve these general bounds. 

Moreover, we show that, for every integer $k$, there exists a bipartite graph that is not in the \Int{k} class, proving that there is no finite $k$ for which the  \Int{k} class contains all the graphs.  This answers  an open question of Ahmed 
and Rahman from 2017.

Finally, using a Ramsey theory argument, we show that for any $k$, there exists graphs that are not in \AND{k}, and graphs that are not in \OR{k}.
\end{abstract}

\section{Introduction}
Gene orthology is one of the most accurate ways to describe differences and similarities in the composition of genomes from different species. Depending on the mode of descent from their common ancestor, genes can be orthologues (\textit{i.e.} genes that are derived by speciation) or   paralogues (\textit{i.e.}  genes that evolved through duplication) \cite{Fitch2000}. The  delineation of orthologous relationships between genes is  indispensable for the reconstruction of the evolution of species and their genomes \cite{Glover2019}.  Indeed, the connection between phylogenetic  tree and gene orthology plays an important role in many areas of biology \cite{Glover2019} and, at the same time, introduces numerous challenging problems in combinatorics and graph theory.  Indeed, from a graph theoretical point of view, the orthology relation on the set of genes can be represented through a graph whose edges are defined by constraints in the phylogenetic tree representing the evolution of the corresponding species. This has lead to numerous problems on the so-called  \emph{tree-definable} graph classes, \textit{i.e.} graphs that can be defined in terms of a tree. These problems have been widely studied and can be mainly divided into two groups depending whether the tree is 
\textit{(i)} vertex-labeled (leading for example to the class of cographs \cite{Corneil1981,Hellmuth2012,Jung1978,Lafond2014}) or 
\textit{(ii)} edge-labeled (leading for example to the class of leaf power graphs \cite{Eppstein2020,Fellows2008,NRT02}).  
In this paper we focus on the second group and more specifically on the class of {\em pairwise compatibility graphs} (PCGs) \cite{Calamoneri2016,KPM03,Rahman2020}  which is a generalization of the class of leaf power graphs. A graph is a PCG if there exists an edge-weighted tree such that each leaf of the tree is a vertex of the graph, and an edge $\{x,y\}$ is present in the graph if and only if the  weight of the path connecting leaves $x$ and $y$ in the tree lies within a given interval.  This has been extended to $k$-interval PCGs, where $k$ given intervals are allowed~\cite{multiintervalPCG}. PCGs were first introduced in the context of sampling subtrees in a phylogenetic tree \cite{KPM03}, however they  can be also used to model evolution in presence of specific biological events  \cite{Hellmuth2017,Hellmuth2020_fitch2,Hellmuth2021_fitch3,Long2020}.  Indeed, the edge weights can represent the number of biological events that have taken place in the evolution of $x$ and $y$ from their least common ancestor.  
During the last decade, PCGs and their extensions have seen a significant amount of research mainly on determining their relation to other graph classes, see \textit{e.g.} \cite{Calamoneri2016,Rahman2020}. 
For example, PCGs whose underlying tree is a star are equivalent to \emph{double-threshold graphs}~\cite{kobayashi2022linear}.  More generally, $k$-interval PCGs contain \emph{multithreshold graphs}, a new graph class that received considerable attention recently~\cite{jamison2020multithreshold,puleo2020some}.  Indeed, $k$-threshold graphs are $\lceil k/2 \rceil$-interval PCGs whose underlying tree is a star.  There are several open questions regarding the structure of multithreshold graphs, and PCGs can offer insights on these questions.
Let us also mention that PCGs whose allowed interval is $\{1, 2, \ldots, k\}$ are known as \emph{$k$-leaf powers}. A longstanding open problem asked whether $k$-leaf powers could be recognized in polynomial time, which was answered positively very recently~\cite{lafond2022recognizing}.  It is possible that the techniques developed there could be applicable to the problem of recognizing PCGs, but a deeper understanding of PCGs and their extensions is required before this can be answered.

As not all graphs are PCGs \cite{BCMP19,DMR15}, and hence, do not have a tree representation, in this paper we consider the problem of determining the minimum number of trees needed to represent the topology of an arbitrary non-PCG graph. 
To this purpose we introduce two classes, namely \ORg s and \ANDg s: a graph is said to be a \OR{k} (\AND{k}) if it is the union (respectively, intersection) of $k$ PCGs. 

These generalizations have applications in computational biology when predicted evolutionary relationships originate from multiple sources of information.  The idea of considering the OR and AND of a graph class was applied in~\cite{hellmuth2018tree} to study the complexity of recognizing $k$-OR cographs, with the aim to explain orthology graphs that originate from more than one tree.
As for applications in graph theory, notice that \ORg s and \ANDg s are related to the \emph{covering number} and to the \emph{intersection dimension}, respectively, with respect to the PCG class. 
Recall that the covering number of $G$ with respect to a class $\mathcal{A}$  of graphs is the minimum $k$ such that $G$ is the union  of $k$ graphs on vertex set $V(G)$, each of which belongs to $\mathcal{A}$ \cite{Knauer2016};
instead, the intersection dimension of a graph $G$ with respect to a class $\mathcal{A}$ of graphs is the minimum $k$ such that $G$ is the intersection of $k$ graphs on vertex set $V(G)$, each of which belongs to $\mathcal{A}$ \cite{Kratochvil1994}.
 
Both our generalizations are strongly related to the \Intg\ class, which is another generalization of the PCG class introduced in \cite{multiintervalPCG}.  In \Intg s, we are allowed to use more than one interval to define a graph: an edge $\{x,y\}$ is present in a graph in \Int{k}\ if and only if the  weight of the path connecting $x$ and $y$ in the tree lies within at least one of the  $k$ \emph{a priori} defined disjoint intervals. 
It is not hard to see that \ORg\ class is a generalization of \Intg.

\paragraph{Results and organization of the paper.} 

In this paper we investigate the classes of PCG, \Intg, \ORg\ and \ANDg: we study how these classes are related to each other and to other graph classes known in the literature. 

More precisely, in Section~\ref{sec:Preliminaries} we give the formal definitions of PCGs and some of its subclasses and \Intg s; we recall some of the properties of these classes which will be used in the rest of the paper. 
In Section~\ref{sec:definitions_and_or} we provide the formal definition of the two new classes of \ORg s and \ANDg s. 

In Section~\ref{sec:interval} we focus on \Intg s:  it is known that every graph $G(V,E)$ is an \Int{|E|} \cite{multiintervalPCG}, it was an open problem to determine whether there exists a constant $k$ for which the \Int{k}\ class contained every graph;  here we answer this question by showing that, for every integer $k$, there exists a bipartite graph that is not in \Int{k}. 
Furthermore, we improve the result of \cite{multiintervalPCG} concerning the minimum $k$ for which an arbitrary graph is in \Int{k}.

In Sections \ref{def:second-def-OR} and \ref{def:second-def-AND},
we investigate the classes \ORg\ and \ANDg, respectively. We provide upper bounds on the minimum $k$ for which arbitrary graphs are in \OR{k} or in  \AND{k} and we improve these bounds for particular graph classes. 
We then use combinatorial arguments to provide a concrete construction of a small graph that is not in \AND{2}. 
Using more abstract Ramsey-type arguments, we then show that for any $k$, there exist graphs that are not in \OR{k} and graphs not in \AND{k}. In fact, this result can be extended to show that the larger classes of $t$-AND $k$-interval-PCG and $t$-OR $k$-interval-PCG, which are respectively the intersection and union of $t$ graphs that are in $k$-interval-PCG, do not contain all graphs for all fixed $t$ and $k$.

In Section~\ref{sec:conclusion} we conclude proposing several open questions.
We believe that the two generalizations of PCGs introduced in this work not only help in better understanding the PCG class itself, but also pave the way to new and challenging combinatorial problems.

\section{Preliminaries}\label{sec:Preliminaries}

Unless otherwise stated, in this paper we will only consider simple graphs, \textit{i.e.} graphs that contain no loops or multiple edges. Moreover, all the {\em trees} are assumed to be unrooted and with edges weighted by nonnegative real numbers. 
Given a tree $T$, we denote by $Leaves(T)$ its leaf set. Given any two leaves  $u$ and $v$ in $Leaves(T)$, we denote by $P_T(u, v)$ the unique path between $u$ and $v$ in $T$ and by $d_{T}(u,v)$ the sum of the weights of the edges on the  path. 
We call $\mu(T)$ the maximum $d_{T}(u,v)$ over all pairs $u, v$. 
For any set of leaves $L \subseteq Leaves(T)$, we  denote by $T_{L}$ the minimal subtree  of $T$ which contains those leaves. 

A known result for trees exploited in this paper is the following.

\begin{lemma}[\cite{YBR10}]\label{lem:three-leaves-subtree} Let $T$ be 
a tree, and $u$, $ v $ and $w$ be three leaves of $T$ such that $P_T(u,v)$ is the longest path in $T_{\{ u, v, w \}}$. 
Let $x$ be a leaf of $T$ other than $u$, $v$ and $w$. Then, $d_T(x,w) \leq \max\{ d_T (x,u), d_T (x,v)\}$.
\end{lemma}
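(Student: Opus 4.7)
The plan is to analyze the structure of the subtree $T_{\{u,v,w\}}$ and see where the path from $x$ to the rest of $T$ attaches. First I would introduce the Steiner point $c$ of $T_{\{u,v,w\}}$, i.e.\ the unique point at which the three paths from $u$, $v$ and $w$ meet (when $T_{\{u,v,w\}}$ is a path, $c$ will coincide with the middle one of the three leaves, and that middle leaf must be $w$ since $P_T(u,v)$ is longest). Set $a = d_T(c,u)$, $b = d_T(c,v)$, $g = d_T(c,w)$. The hypothesis that $P_T(u,v)$ is the longest path in $T_{\{u,v,w\}}$ translates to $a+b \geq a+g$ and $a+b \geq b+g$, so $g \leq \min(a,b)$; this is the one structural fact driving the whole argument.

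Next, since $x$ is a leaf of $T$ distinct from $u,v,w$, the path from $x$ to any leaf in $\{u,v,w\}$ must enter $T_{\{u,v,w\}}$ at a well-defined point $p$, and I would write $e = d_T(x,p)$. Then $d_T(x,u) = e + d_T(p,u)$, and similarly for $v$ and $w$, so the claim reduces to showing $d_T(p,w) \leq \max\{d_T(p,u), d_T(p,v)\}$. The point $p$ lies on one of the (up to) three segments $cu$, $cv$, $cw$ of $T_{\{u,v,w\}}$, which gives the natural case split.

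Now I would handle the three cases. If $p$ lies on the segment $cu$ at distance $t$ from $c$, then $d_T(p,w) = t + g$ and $d_T(p,v) = t + b$, so $g \leq b$ gives $d_T(p,w) \leq d_T(p,v)$. The case when $p$ lies on $cv$ is symmetric, using $g \leq a$. Finally, if $p$ lies on the segment $cw$ at distance $t$ from $c$, then $d_T(p,w) = g - t \leq g \leq a \leq t + a = d_T(p,u)$. In every case the required inequality holds, and adding the common $e$ on both sides yields $d_T(x,w) \leq \max\{d_T(x,u), d_T(x,v)\}$.

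There is no real obstacle here: the proof is a direct case analysis, and the only subtlety is remembering to treat the degenerate configuration where $T_{\{u,v,w\}}$ is a path (equivalently $g = 0$), which is subsumed by the same bookkeeping.
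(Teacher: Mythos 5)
Your proof is correct and complete. Note that the paper itself gives no proof of this lemma---it is imported verbatim from \cite{YBR10}---so there is nothing internal to compare against; your argument (introduce the Steiner point $c$ of $T_{\{u,v,w\}}$, translate the ``longest path'' hypothesis into $d_T(c,w)\leq\min\{d_T(c,u),d_T(c,v)\}$, and then case-split on which of the three branches the attachment point $p$ of $x$ lies on) is the standard one and matches the spirit of the original source. The degenerate path configuration is indeed subsumed by the same bookkeeping, as you observe.
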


Let $G=(V, E)$ be a simple, undirected, and not necessarily connected graph with vertex set $V$ and edge set $E$. If we are considering more than a graph, we exploit the notation $V(G)$ and $E(G)$ to mean the vertex and edge sets of $G$, without introducing ambiguity.
The {\em complement graph of $G$}, denoted as $\overline{G}$,  is the graph with vertex set $V$ and edge set $\overline{E}$ consisting of all the non-edges of $G$.
Given a graph class $\mathscr{C}$, its complement $\overline{\mathscr{C}}$ consists of all graphs that are the complement of a graph in $\mathscr{C}$.

Given two graphs $G_1=(V,E_1)$ and $G_2=(V,E_2)$ on the same set of vertices, the graph $G=(V,E_1 \cap E_2)$ is the  {\em intersection graph} of $G_1$ and $G_2$, whereas the graph $G=(V,E_1 \cup E_2)$ is their  {\em union graph}.  In this latter case, we also say that $G_1$ and $G_2$ {\em cover} the edges of $G$.

In the following, we recall some  definitions and results that will be useful in the rest of the paper.

\begin{definition} [\cite{KPM03}]
\label{def.PCG}
Given a tree $T$ and an interval $I$ of nonnegative real numbers, the \emph{pairwise compatibility graph} (PCG) associated to $T$  and $I$, denoted as $PCG(T,I)$, is a graph $G=(V,E)$  whose vertex set $V$ coincides with $Leaves(T)$
and $e=\{u,v\}$, with $u\neq v$,  belongs to $E$ if and only if $d_{T}(u,v) \in I$. \\
A graph $G$ is a PCG if there exists the pair $(T,I)$ such that $G=PCG(T,I)$. 
\end{definition}

%\ml{[ML: is it worth mentioning in the above that $u, v$ should be distinct?  If $0 \in I$, some could think that $G$ can have self-loops.  Same goes with def of LPG.]}

Several  closure properties of the PCG class under some graph operations have been studied and  we use the following result:
\begin{property}[\cite{CMPS13}]\label{prop:closure_pcg}
Let $G$ be  a PCG, then:
\begin{enumerate}
\item any graph  obtained from $G$ by adding a new vertex  with degree $1$ is a PCG (Theorem 10);
\item the  graph  obtained from $G$ by adding a new vertex having the same neighborhood of any given vertex $x$ is a PCG (Theorem 11).%
\end{enumerate}
\end{property}

Furthermore, the next property allows us to work only with natural numbers. 
\begin{property}[\cite{CMPS13}]\label{prop:interi}
If a graph $G$ is a $PCG$ then there exist a tree $T$ whose edges are weighted by natural numbers and $a,b$ in $\mathbb{N}$, with $a \leq b$ such that $G=PCG(T,[a,b])$.
\end{property}

The  PCG class generalizes the well known class of leaf powers graphs.

\begin{definition}[\cite{NRT02}]
Let $T$ be a tree and $d_{max}$ a non negative real number, the {\em leaf power graph} (LPG) associated to $T$ and $d_{max}$, denoted as $LPG(T, d_{max})$, is a graph $G=(V,E)$ whose vertex set coincides with $Leaves(T)$ and $e=\{ u,v \}$ belongs to $E$ if and only if $d_T(u,v) \leq d_{max}$.
A graph $G$ is an LPG if there exists a pair $(T, d_{max})$ such that $G=LPG(T, d_{max})$.
\end{definition}

%ML changed this
%It is defined and studied also the complement of the LPG class, namely the min-leaf power graphs.
The complement of the LPG class, namely the min-leaf power graphs, have also been defined and studied.

\begin{definition}[\cite{CP12}]
Let $T$ be a tree and $d_{min}$ a non negative real number, the {\em min-leaf power graph} (min-LPG) associated to $T$ and $d_{min}$, denoted as $mLPG(T, d_{min})$, is a graph $G=(V,E)$ whose vertex set coincides with $Leaves(T)$ and $e=\{ u,v \}$ belongs to $E$ if and only if $d_T(u,v) \geq d_{min}$.
A graph $G$ is an mLPG if there exists the pair $(T, d_{min})$ such that $G=mLPG(T, d_{min})$.
\end{definition}

Note that we can rephrase the definitions of LPGs and mLPGs as special cases of PCGs whose associated intervals are $[0, d_{max}]$ and $[d_{min}, \mu(T)]$, respectively. 
In the following we will always use this formulation.

Initially it was believed that every graph was a PCG \cite{KPM03}, while it is now well known that not all graphs are PCGs (see {\em e.g.} \cite{BCMP19,DMR15,YBR10}).
Hence, the following super-class of PCGs, namely \Intg s,  has been introduced: 

\begin{definition}[\cite{multiintervalPCG}]\label{def:interval}
Given a tree $T$ and $k \geq 1$ disjoint intervals $I_1, \ldots, I_k$ of nonnegative reals, the \emph{\Int{k}} associated to $T$ and $I_1, \ldots, I_k$, denoted as $k\textnormal{-}PCG(T,I_1,\ldots, I_k)$, is a graph  $G=(V,E)$ whose vertex set $V$ coincides with $Leaves(T)$ and
$e=\{u,v\}$ belongs to $E$ if and only if $d_{T}(u,v) \in I_i$ for some $i$, $1\leq i \leq k$. \\
A graph $G$ is a \Int{k} if there exists a tuple $(T,I_1,\ldots,I_k)$ such that $G=k\textnormal{-}PCG(T,I_1,\ldots, I_k)$.
\end{definition}

\begin{figure}[!ht]
\centering
\includegraphics[width=\textwidth]{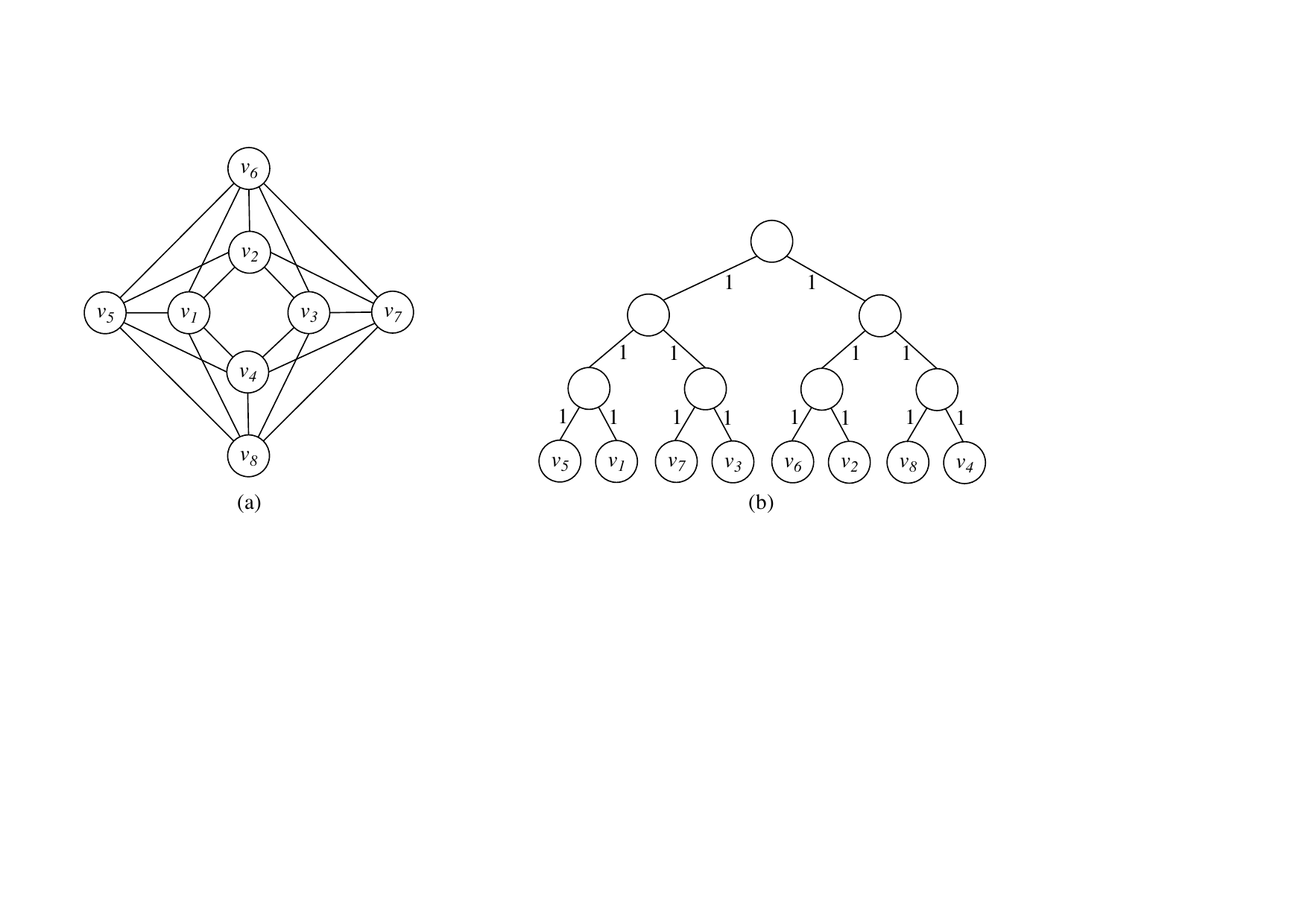}
    \caption{{\bf(a)} A graph G which is not a PCG \cite{DMR15}. {\bf(b)} a tree $T$ such that $G=$ \Int{2}$(T,I_1, I_2)$ where $I_1=[1,3]$ and $I_2=[5,6]$.}\label{y}
\end{figure}

When $k=1$, the \Int{1} class coincides with the PCG class. 
Instead, already when $k=2$ the PCG class is a strict subset of the \Int{2} class; see for example, the graph in Figure~\ref{y}. \\
Other examples of graphs that are \Int{2}s but not PCGs are wheels ({\em i.e.}, one universal vertex connected to all the vertices of a cycle) with at least $9$ vertices  and a restricted subclass of series-parallel graphs \cite{multiintervalPCG}.

\section{Two new generalizations of PCGs}\label{sec:definitions_and_or}

In the generalization of the PCG class to \Intg, given in Definition \ref{def:interval}, the tree remains the same but more than one interval is allowed. It is natural then to consider the further generalization where different trees are also allowed. 

\begin{definition} [\OR{k}]\label{def:or}\label{def:second-def-OR}
Let $T_1,\ldots, T_k$ be $k \geq 1$ trees and $Leaves(T_1)=\ldots=Leaves(T_k)=L$; 
let $I_1, \ldots, I_k$ be $k$ (not necessarily disjoint) intervals of nonnegative real numbers; the \emph{\OR{k}} associated to $T_1, \ldots, T_k$ and $I_1, \ldots, I_k$, denoted as $\OR{k}(T_1,\ldots, T_k,I_1,\ldots, I_k)$, is a graph  $G=(V,E)$ %such that $V=L$ 
whose vertex set $V$ coincides with $L$
and  $e=\{u,v\}$ belongs to $E$ if and only if there exists an $i$, $1\leq i \leq k$, such that $d_{T_i}(u,v) \in I_i$.  
A graph $G$ is a \OR{k} if there exists a tuple $(T_1,\ldots, T_k,I_1,\ldots, I_k)$ such that $G=~\OR{k}(T_1,\ldots, T_k,I_1,\ldots, I_k)$.

Equivalently,  graph $G=(V,E)$ is a \OR{k}$(T_1,\ldots, T_k,I_1,\ldots, I_k)$ if and only if there exist $k$ graphs $G_1, \ldots, G_k$ on the same vertex set $V$ such that for all $i$, $1\leq i \leq k$, it holds that $G_i=PCG(T_i,I_i)$ and $E(G)= \cup_{i=1}^{k}E(G_i)$. 
\end{definition}
\noindent

\begin{figure}[!ht]
\centering
\includegraphics[width=\textwidth]{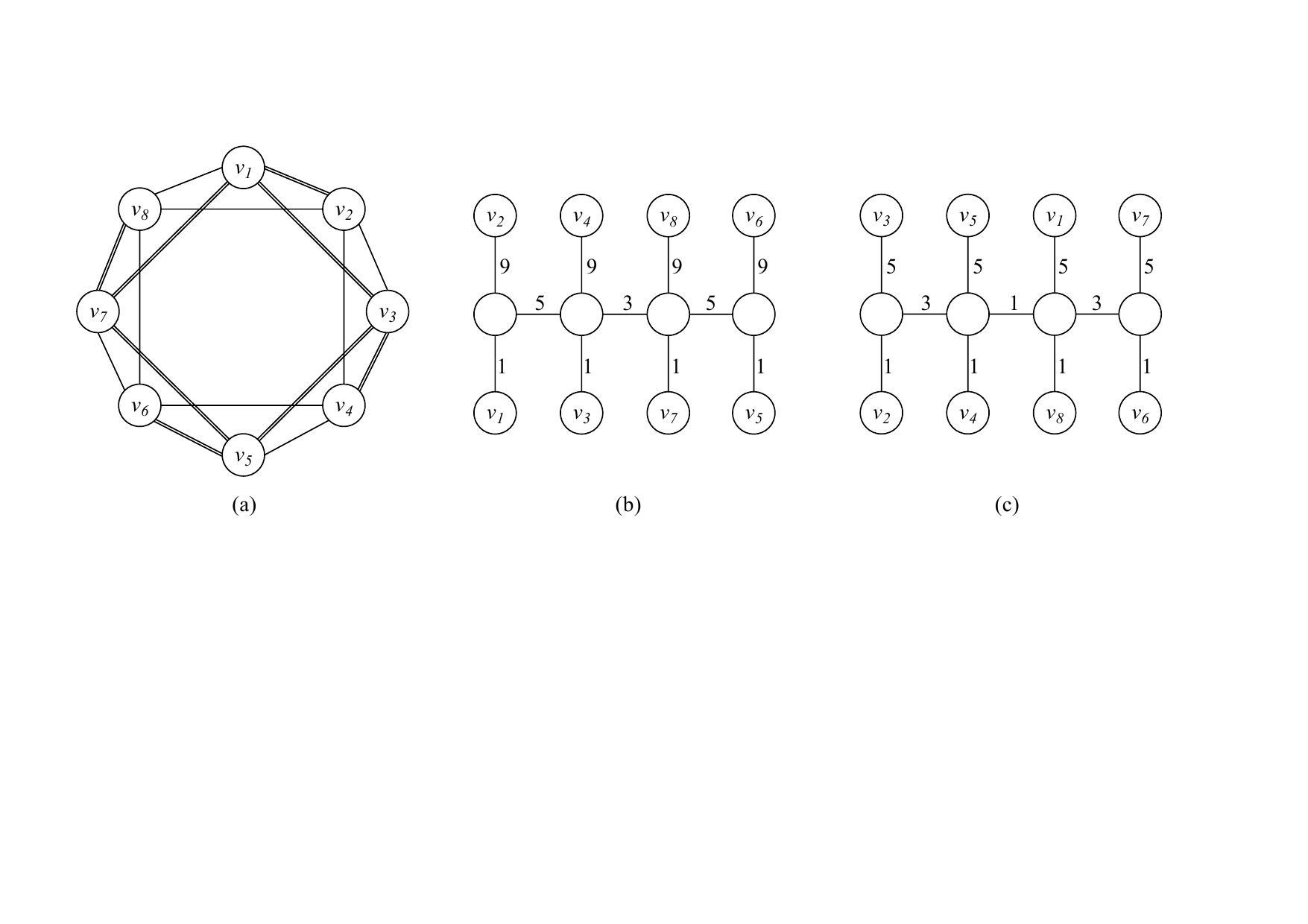}
\vspace*{-5cm}
    \caption{{\bf(a)} A graph $G$ which is not a PCG \cite{BCMP19}. The two graphs on the same set of vertices $G_1$ (induced by the double-lined edges) and $G_2$ (induced by the continuous-lined edges) are in PCG, and we provide:
    {\bf(b)} a tree $T_1$ and an interval $I_1=[6,10]$ such that $G_1=$ PCG$(T_1,I_1)$;
    {\bf(c)} a tree $T_2$ and an interval $I_2=[5,6]$ such that $G_2=$ PCG$(T_2,I_2)$. It clearly holds that $G=$\OR{2}$(T_1, T_2, I_1, I_2)$.}\label{fig:es_or}
\end{figure}
In Figure \ref{fig:es_or} an example of a graph which is a \OR{2}.
Note that in this example, $G$ is the \emph{disjoint} union of two PCGs but, in general, the PCGs that form the graph are allowed to have edges in common.

Since $G$ is a \OR{k} if and only if it can be expressed as the union graph of $k$ PCG subgraphs, we refer to this by  \OR{k}$(G_1,\ldots, G_k)$ when we want to consider the subgraphs instead of the trees and intervals.  It is not hard to see that \ORg\ class is a generalization of \Intg. Indeed, any $G$ in \Int{k} also belongs to \OR{k}, by considering the trees to be identical.
On the other hand, a $k$-OR-PCG $G$ is not necessarily a $k$-interval-PCG, since $G$ could be obtained from the union of PCGs that use different trees.
For example in Figure~\ref{fig:es_or}, the two trees that certify that $G$ is a 2-OR-PCG cannot be used to assess that $G$ is a $2$-interval-PCG.

Requiring that $e=\{u,v\}$ belongs to $G$ if and only if $d_{T_i}(u,v) \in I_i$ for \emph{all} $i$  leads to a different generalization of the PCG class.

\begin{definition} [\AND{k}]\label{def:and}\label{def:second-def-AND}
Let $T_1,\ldots, T_k$ be $k \geq 1$ trees 
and $Leaves(T_1)=\ldots=Leaves(T_k)=L$; let $I_1, \ldots, I_k$ be $k$ (not necessarily disjoint) intervals of nonnegative real numbers; the \emph{\AND{k}} associated to $T_1, \ldots, T_k$ and $I_1, \ldots, I_k$, denoted as $\AND{k}(T_1,\ldots, T_k,I_1,\ldots, I_k)$, is a graph  $G=(V,E)$ 
whose vertex set $V$ coincides with $L$
and $e=\{u,v\}$ belongs to $E$ if and only if for all $i$, $1\leq i \leq k$, $d_{T_i}(u,v) \in I_i$.  
A graph $G$ is a \AND{k} if there exists a tuple $(T_1,\ldots, T_k,I_1,\ldots, I_k)$ such that $G=\AND{k}(T_1,\ldots, T_k,I_1,\ldots, I_k)$.

Equivalently, a graph $G=(V,E)$ is a \AND{k}$(T_1,\ldots, T_k,I_1,\ldots, I_k)$ if and only if there exist $k$ graphs $G_1, \ldots, G_k$ on the same vertex set $V$ such that for all $i$, $1\leq i \leq k$, $G_i=PCG(T_i,I_i)$ and $E(G)= \cap_{i=1}^{k}E(G_i)$. 
\end{definition}

\begin{figure}[!ht]
\centering
\includegraphics[width=\textwidth]{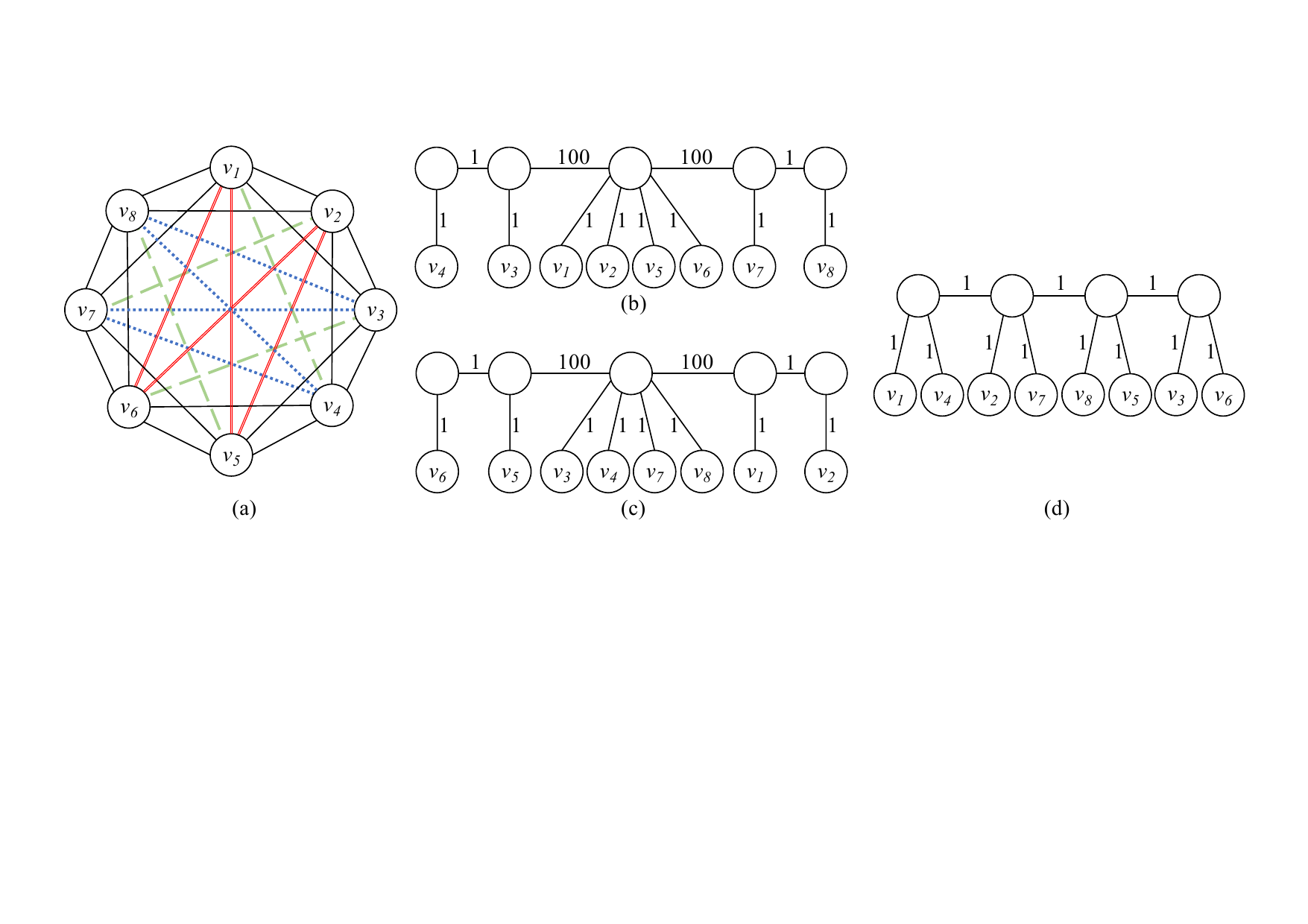}
\vspace*{-5cm}
    \caption{{\bf(a)} A graph $G$ (considering only the black and continuous-lined edges) which is not a PCG \cite{BCMP19}, and three sets of edges: the set $E_1$ of blue and dotted edges, the set $E_2$ of red and double-lined edges, and the set $E_3$ of green and long-dashed edges. 
    The three graphs $G_1=K_8 \setminus E_1$, $G_2=K_8 \setminus E_2$, and $G_3=K_8 \setminus E_3$ are in PCG and we provide:
    {\bf(b)} a tree $T_1$ such that $G_1=$ PCG$(T_1,I_1)$; 
    {\bf(c)} a tree $T_2$ such that $G_2=$ PCG$(T_2,I_2)$;
    {\bf(d)} a tree $T_3$ such that $G_3=$ PCG$(T_3,I_3)$,
    where $I_1=I_2=[2,103]$, and $I_3=[3,5]$. 
    It clearly holds that $G=$~\AND{3}$(T_1, T_2, I_1, I_2, I_3)$.}\label{fig:es_and}
\end{figure}
In Figure \ref{fig:es_and} an example of a graph which is a \AND{3}.

\noindent
Since $G$ is a \AND{k} if and only if it can be expressed as the intersection graph of $k$ PCGs, we refer to this by  \AND{k}$(G_1,\ldots, G_k)$ when we want  to consider the PCGs instead of the trees and intervals. 

Clearly, when $k=1$, the \OR{1} and \AND{1} classes coincide with the PCG class. It is worth to observe that \Int{k}s generalize PCGs using one tree and $k$ intervals while \OR{k}s and \AND{k}s generalize them using $k$ trees and $k$ intervals. 
It is hence natural to consider the intermediate case where PCGs are generalized using $k$ trees and one interval. 
The following result shows that this constraint does not create classes different from \OR{k}s and \AND{k}s.

\begin{theorem}
For any graph $G$, it is a \OR{k}$(T_1,$ $\ldots, T_k$, $I_1,\ldots, I_k)$ (\AND{k}$(T_1$, $\ldots$, $T_k$, $I_1$, $\ldots, I_k)$) if and only if  there exist $k$ trees $T''_1, \ldots, T''_k$ and one interval $I$ of nonnegative real numbers such that  $G=$ \OR{k}$(T''_1,\ldots, T''_k,I,\ldots, I)$ ($G=$ \AND{k}$(T''_1,\ldots, T''_k,I,\ldots, I)$).
\end{theorem}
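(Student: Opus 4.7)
The ``only if'' direction is immediate: given a representation $G = \OR{k}(T''_1, \ldots, T''_k, I, \ldots, I)$ one sets $T_i := T''_i$ and $I_i := I$ for every $i$; the same trivially works for \ANDg. So the content of the theorem lies entirely in the other direction, and my plan below targets that.

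The plan is to show that, PCG by PCG, one can replace each pair $(T_i, I_i)$ by an equivalent pair $(T''_i, I)$ sharing a common interval $I$, via an affine change of distances realized on the tree. Concretely, for any PCG equal to $PCG(T_i, [a_i, b_i])$ with $a_i < b_i$ and for any target interval $[a,b]$ with $b > a$, I would construct $T''_i$ by first scaling every edge weight of $T_i$ by the factor $\alpha_i := (b - a)/(b_i - a_i)$, and then attaching to each leaf $u$ of the scaled tree a new pendant edge of weight $c_i := (a - \alpha_i a_i)/2$, relabeling the new endpoint with the label of $u$ (so that $Leaves(T''_i) = Leaves(T_i)$ and the old leaves become internal of degree $2$). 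The pairwise distance between leaves $u,v$ in $T''_i$ then equals $\alpha_i\, d_{T_i}(u,v) + 2c_i$, and a direct check shows that this lies in $[a,b]$ if and only if $d_{T_i}(u,v) \in [a_i,b_i]$; hence $PCG(T''_i,[a,b]) = PCG(T_i,[a_i,b_i])$.

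To realize the plan globally I would first pick $b-a = 1$ (so $\alpha_i = 1/(b_i - a_i)$ is determined) and then pick $a$ large enough that every $c_i = (a - \alpha_i a_i)/2$ is nonnegative, e.g.\ $a := \max_i \alpha_i a_i$. With $I := [a,b]$, the trees $T''_1,\ldots,T''_k$ then satisfy $PCG(T''_i, I) = PCG(T_i, I_i)$ for every $i$. Taking unions of edge sets across $i$ yields the $\ORg$ statement, and taking intersections yields the $\ANDg$ statement, since both union and intersection are preserved under index-by-index equality of edge sets.

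The only point that requires care is the degenerate case where some $I_i$ is a single point $\{a_i\}$, which makes $\alpha_i$ undefined. By Property \ref{prop:interi} I may assume the weights of $T_i$ and the endpoints $a_i,b_i$ are natural numbers; then, whenever $a_i = b_i$, doubling every edge weight of $T_i$ and replacing $\{a_i\}$ by $[2a_i,\,2a_i+1]$ defines the same PCG (all new pairwise distances are even, so only $2a_i$ in the new tree can fall in $[2a_i,2a_i+1]$) and restores the strict inequality $b_i > a_i$ needed above. Beyond this minor reduction, the entire argument is a clean affine reparametrization, so I do not anticipate any further obstacle.
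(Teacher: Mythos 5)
Your proposal is correct and follows essentially the same route as the paper's own proof: an affine reparametrization of each $(T_i,I_i)$ obtained by scaling all edge weights to equalize interval lengths and then shifting all leaf-to-leaf distances by a constant (the paper increases the weights of leaf-incident edges, you attach zero-or-positive pendant edges and relabel, which is equivalent), after first disposing of single-point intervals using Property~\ref{prop:interi}. The only notable difference is your treatment of the degenerate case $a_i=b_i$ (doubling weights and using the parity of the new distances), which is a clean and valid alternative to the paper's fix.
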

\begin{proof}
$\Leftarrow$: Trivial, it is sufficient to choose
$I_1=\ldots=I_k=I$.

$\Rightarrow$:
%It is known (\cite{CMPS13}, Lemma 2) that if $G$ is a $PCG$ then there exist a tree $T$ whose edges are weighted by natural numbers and an interval $I$ such that $G=PCG(T,I)$.
Using Property \ref{prop:interi} on $G=$ \OR{k}$(T_1,\ldots, T_k,I_1,\ldots, I_k)$, we can assume that $T_1, \ldots , T_k$ are all weighted by natural numbers. 
Moreover, letting $I_i=[a_i,b_i]$, $1\leq i\leq k$,  we can assume that $b_i > a_i$. 
Indeed, if for some $i$, $1\leq i\leq k$, interval $I_i$ consists of only 1 point (\textit{i.e.}, $a_i=b_i$), 
then we can create a new tree $T^*_i$ by adding 1 to all the edges of $T_i$ incident to a leaf. 
In this way all the distances between two leaves $u_i,v_i$ in $T_i$ are incremented by exactly $2$ in $T^*_i$ and, calling $I^*_i=[a_i+1.5,a_i+2]$, $G_i=PCG(T_i,I_i)=PCG(T^*_i,I^*_i)$.  To this purpose we consider the following cases: (i) $d_{T_i}(u_i, v_i) \in I_i =[a_i]$ then  $d_{T^*_i}(u_i, v_i)=a_i+2 \in I^*_i$; (ii) $d_{T_i}(u_i, v_i) > a_i $ then  $d_{T^*_i}(u_i, v_i)>a_i+2 \not \in I^*_i$; (iii) $d_{T_i}(u_i, v_i) < a_i$ and as we have assumed that the tree $T_i$ is  weighted by natural numbers we have $d_{T_i}(u_i, v_i) < a_i-1$ and thus $d_{T^*_i}(u_i, v_i) < a_i-1+2< a_i +1.5 \not \in I^*_i$. We can hence substitute pair $(T_i,I_i)$ with $(T^*_i,I^*_i)$ in the definition of $G$ as \OR{k}. 

Let $d_i=b_i-a_i >0$ and denote  $D=\Pi_{i=1}^k d_i$. For every $i$, $1\leq i\leq k$, we define
$T'_i$ as obtained from $T_i$ by multiplying the weight of each edge by $\frac{D}{d_i}$ and $I'_i=[a'_i,b'_i]=[\frac{D}{d_i}a_i, \frac{D}{d_i}b_i]$.
(Note that all intervals $I'_1, \ldots, I'_k$ have the same length $D$ and thus for all $i$, $b'_i=a'_i + D$.)
It is easy to see that $G=$\OR{k}$(T'_1,\ldots, T'_k,I'_1,\ldots, I'_k)$.

We now perform a further modification of the trees in order to obtain coinciding intervals.
To this aim, let $A=\max_{1 \leq i \leq k}{\{a'_i\}}$ and, for any $i$, $1\leq i\leq k$, construct tree  $T''_i$ from tree $T'_i$ by increasing the weight of each edge incident to a leaf by $\frac{A-a'_i}{2}$. 
So, for any two leaves $u,v$ we have $d_{T''_i}(u,v)=d_{T'_i}(u,v)+A-a'_i$ and thus $PCG(T'_i,I'_i)=PCG(T''_i,I)$ where $I=[A,A+D]$. 
It follows then  $G=$\OR{k}$(T''_1, \ldots, T''_k, I, \ldots, I)$. 

Using the same arguments, we can prove that the claim holds also for the \AND{k} class.
\end{proof}

%%%%%%%%%%%%%%%%%%%%%%%%%%%%%%%%%%%%%%%%%
\section{On multi-interval-PCGs}\label{sec:interval}
In \cite{multiintervalPCG} it is proved that every graph with $m$ edges is an \Int{m}, thereby deducing that every graph is a \Int{k} for some $k$.
It was then left as an open question to determine the minimum $k$ for which the \Int{k} class contains {\em every} graph. 
It is not known even if this hypothetical $k$ could be 2 as there is no graph known to be outside \Int{2}s. Here, not only we provide the first example of such a graph but we also show that, for any $k\geq 2$ there exists a bipartite graph that requires at least $k$ intervals (and hence is not a \Int{(k-1)}).
This means that there is no constant $k$ for which the \Int{k} class contains all the graphs. 

For each $k\geq 2$ we define bipartite graph $G_k=(V_k,E_k)$ with $V_k=\{x,y\}\cup \{u_i,v_i | 1\leq i\leq 2k-2\}$ and $E_k=
\{ x,y \}\cup\{ \{x,u_{2j-1} \}, \{x,v_{2j-1}\}
| 1 \leq j\leq k-1\}$. 
$G_k$ has $4k-2$ vertices, $2k-1$ edges and $2k-1$ connected components. 
In Figure~\ref{fig:g4}, $G_3$ is depicted.

\begin{figure}[ht!]
\centering
\includegraphics[scale=0.6]{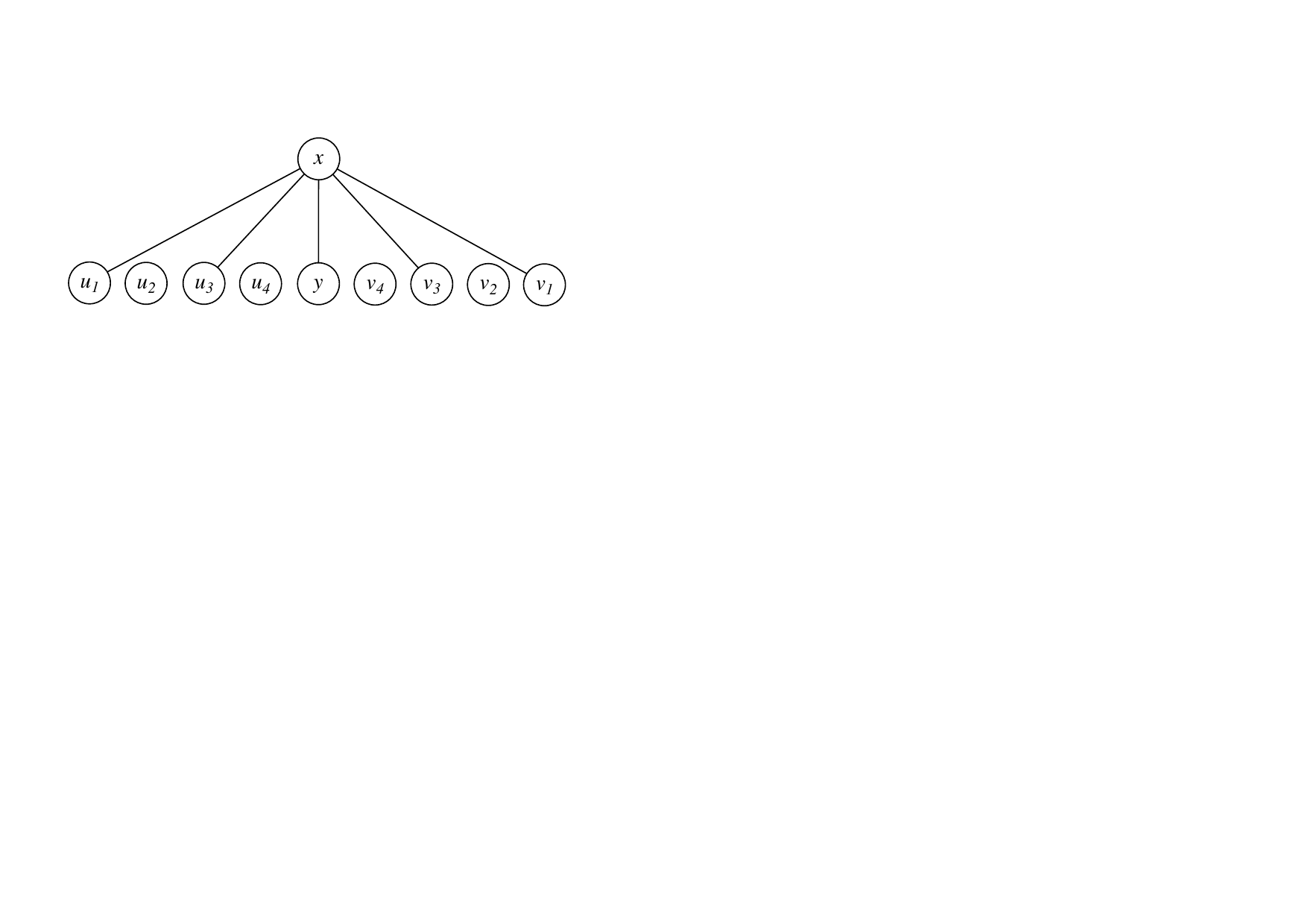}
    \caption{Graph $G_3$.}\label{fig:g4}
\end{figure}

For the sake of simplicity, in what follows in this section we will write $[u_i,v_i]$ to mean the set of vertices $\{u_i, u_i+1 \ldots u_{2k-2}, y, v_{2k-2}, v_{2k-3}\ldots v_i\}$. 

\begin{lemma}\label{secondo} Let $G$ be a graph that contains $G_k$, $k \geq 2$, as an induced subgraph. Let $t$ be the minimum integer for which $G=t\textnormal{-}PCG(T,I_1,\ldots, I_t)$. 
If for each $i$, $1\leq i\leq 2k-2$, path $P_T(u_i,v_i)$ is the longest path in $T_{[u_i,v_i]}$, then  $t\geq k$.
 \end{lemma}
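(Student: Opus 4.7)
The plan is to apply Lemma~\ref{lem:three-leaves-subtree} iteratively along the index $i$ to build a chain of inequalities among the distances $d_T(x,\cdot)$, and then to exploit the alternation between edges and non-edges incident to $x$ in $G_k$ together with the disjointness of the intervals $I_1,\ldots,I_t$.

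First, I would observe that, for each $i$ with $1\le i\le 2k-3$, the triple $\{u_i,v_i,u_{i+1}\}$ sits inside $[u_i,v_i]$, so by hypothesis $P_T(u_i,v_i)$ is still the longest path in $T_{\{u_i,v_i,u_{i+1}\}}$. Applying Lemma~\ref{lem:three-leaves-subtree} with the fourth leaf being the vertex $x$ (distinct from the three in question) yields $d_T(x,u_{i+1})\le\max\{d_T(x,u_i),d_T(x,v_i)\}$, and the symmetric application with $v_{i+1}$ in place of $u_{i+1}$ yields the analogous inequality. Finally, applying the same lemma to $\{u_{2k-2},v_{2k-2},y\}$ gives $d_T(x,y)\le\max\{d_T(x,u_{2k-2}),d_T(x,v_{2k-2})\}$. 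Setting $M_i:=\max\{d_T(x,u_i),d_T(x,v_i)\}$, these inequalities combine to give the chain
\[
M_1\ge M_2\ge M_3\ge\cdots\ge M_{2k-2}\ge d_T(x,y).
\]

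Next, I would classify the $2k-1$ values appearing in this chain by their interval membership. Write $S:=I_1\cup\cdots\cup I_t$. For odd $i=2j-1$ with $1\le j\le k-1$, the value $M_{2j-1}$ is realized by a distance $d_T(x,u_{2j-1})$ or $d_T(x,v_{2j-1})$ corresponding to an edge of $G$, hence $M_{2j-1}\in S$; the same holds for $d_T(x,y)\in S$. For even $i=2j$, $M_{2j}$ is a distance to a vertex not adjacent to $x$ in $G$, so $M_{2j}\notin S$. Thus the chain interleaves $k$ edge-values in $S$ with $k-1$ non-edge-values outside $S$.

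The final step is a short convexity argument: if two of the $k$ edge-values, say $M_{2a-1}$ and $M_{2b-1}$ with $a<b$ (treating $d_T(x,y)$ as the case $2b-1=2k-1$), were contained in the same interval $I_\ell$, then by convexity $I_\ell\supseteq[M_{2b-1},M_{2a-1}]$, and the chain places $M_{2a}$ in this interval, contradicting $M_{2a}\notin S$. Hence the $k$ edge-values occupy $k$ pairwise distinct intervals, forcing $t\ge k$. There is no genuine obstacle in the argument: the substantive content is supplied by Lemma~\ref{lem:three-leaves-subtree}, and the remaining work is indexing carefully and invoking the convexity of real intervals.
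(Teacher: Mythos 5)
Your proof is correct and follows essentially the same route as the paper's: the same iterated application of Lemma~\ref{lem:three-leaves-subtree} to the triples $\{u_i,v_i,u_{i+1}\}$, $\{u_i,v_i,v_{i+1}\}$ and $\{u_{2k-2},v_{2k-2},y\}$ to obtain the monotone chain of maxima, followed by the same alternation/convexity argument forcing the $k$ edge-values into $k$ distinct intervals. Your explicit pairwise version of the final step (any two edge-values $M_{2a-1}, M_{2b-1}$ sharing an interval would trap some $M_{2a}\notin S$ inside it) is a slightly more careful rendering of what the paper states for consecutive odd indices and then "generalizes".
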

 
\begin{proof}
Let $G=t\textnormal{-}PCG(T,I_1,\ldots, I_t)$. For any $i$, $1\leq i\leq 2k-3$, consider the subtrees $T_{\{u_i,u_{i+1},v_i \} }$ and $T_{\{ u_i,v_{i+1},v_i\} }$. From the hypotheses we have that $P_T(u_i,v_i)$ is the longest path in both the subtrees and hence, using Lemma~\ref{lem:three-leaves-subtree}, we have: 

$$d_T(x, u_{i+1}) \leq \max \{ d_T(x, u_i), d_T(x, v_i)\} \mbox{ and } d_T(x, v_{i+1}) \leq \max \{ d_T(x, u_i), d_T(x, v_i)\},$$ 
from which:

\begin{equation}\label{eq:inequalities}
    \max\{d_T(x,u_i),d_T(x,v_i)\} 
\geq \max\{d_T(x,u_{i+1}),d_T(x,v_{i+1})\}\mbox{ for all $i$}. 
\end{equation}

Moreover, consider the subtree $T_{[u_{2k-2}, v_{2k-2}]}$ and the leaves $u_{2k-2}, v_{2k-2}$ and $y$. We apply Lemma~\ref{lem:three-leaves-subtree} obtaining 

\begin{equation}\label{eq:inequalities2}
\max\{d_T(x,u_{2k-2}),d_T(x,v_{2k-2})\}\geq d_T(x,y).
\end{equation}

Combining (\ref{eq:inequalities}) and (\ref{eq:inequalities2})  we obtain the following chain of inequalities:

{\footnotesize
\begin{equation}\label{eq:inequalities3}
\max\{d_T(x,u_1),d_T(x,v_1)\}\geq \max\{d_T(x,u_2),d_T(x,v_2)\}\geq \ldots \geq  \max\{d_T(x,u_{2k-2}),d_T(x,v_{2k-2})\}\geq d_T(x,y).
\end{equation}
}
 
By the definition of $G_k$, edges $\{ x,u_i \}$ and $\{ x,v_i \}$ belong to $E_k$ if and only if $i$ is odd. 
%da vedere l'ultimo elemento dxy
Hence, in (\ref{eq:inequalities3}), denoting $\max\{d_T(x,u_i),d_T(x,v_i)\}$ by $d_i$, value $d_i$ belongs to some interval $I \in \{I_1, \ldots, I_t\}$  for odd $i$, while it does not belong to any interval  for even $i$, as they correspond to pairs not connected by an edge in $G_k$. 
So, if we consider any odd index $j$, inequalities $d_j \geq d_{j+1} \geq d_{j+2}$ imply that $d_j$ and $d_{j+2}$ cannot belong to the same interval.
Generalizing to the complete chain of inequalities (\ref{eq:inequalities3}) (where the last $d_{j+2}$ coincides with $d_T(x,y)$), we need at least $k$ distinct intervals, hence $t \geq k$.
\end{proof}

We now define the bipartite graph $H_k=(A_k, B_k, E_k)$, $k \geq 2$, as follows:  \\
- vertex sets $A_k$ and $B_k$ are such that $|A_k|=4k-3$ and $|B_k|={{4k-3}\choose{2k-1}}$; \\ 
- edge set $E_k$ is defined in such a way that each vertex in $B_k$ has exactly $2k-1$ neighbors in $A_k$ and no two vertices in $B_k$ have the same neighborhood.  Notice that this is possible due to the size of $B_k$.

\begin{theorem}\label{theo:not-interval}
Graph $H_k=(A_k, B_k, E_k)$  is not a \Int{(k-1)}.
\end{theorem}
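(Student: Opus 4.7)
The plan is to argue by contradiction: assume $H_k$ is a \Int{(k-1)}, say $H_k = (k-1)\textnormal{-}PCG(T,I_1,\ldots,I_{k-1})$ for some tree $T$ and intervals $I_1,\ldots,I_{k-1}$, and derive a contradiction by exhibiting an induced copy of $G_k$ inside $H_k$ that satisfies the hypothesis of Lemma~\ref{secondo} with respect to this specific tree $T$. Lemma~\ref{secondo} will then force the minimum number of intervals to be at least $k$, contradicting the existence of the $(k-1)$-interval representation.

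The key observation enabling the construction is that $H_k$ is bipartite and, by the definition of $V_k$, every $(2k-1)$-subset of $U_k$ is realized as the neighborhood $N(v)\subseteq U_k$ of some $v\in V_k$. This gives full freedom in choosing the hub vertex: once the $4k-3$ leaves of $T$ corresponding to $U_k$ have been labeled as $u_1,v_1,u_2,v_2,\ldots,u_{2k-2},v_{2k-2},y$, we can find an $x\in V_k$ with $N(x) = \{y,u_1,v_1,u_3,v_3,\ldots,u_{2k-3},v_{2k-3}\}$. Because $H_k$ is bipartite, there are no edges within $U_k$, so the induced subgraph of $H_k$ on $\{x\}\cup U_k$ automatically has exactly the edge set of $G_k$, regardless of which labeling of $U_k$ is used.

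The core of the plan is therefore to construct the labeling so that the longest-path condition of Lemma~\ref{secondo} holds. I propose a greedy peel-off procedure on $T$: initialize $L := U_k$; for $i=1,2,\ldots,2k-2$, choose $(u_i,v_i)$ to be a diametric pair of $L$, i.e., a pair maximizing $d_T(\cdot,\cdot)$ among the leaves currently in $L$, and then remove $u_i,v_i$ from $L$; finally label the unique leaf that remains as $y$. The verification is clean: the set $[u_i,v_i]$ defined in the excerpt equals the content of $L$ at the start of iteration $i$, so $T_{[u_i,v_i]}$ is precisely the subtree spanned by the leaves still in $L$ at that moment. Since the longest path in a tree is attained between two of its leaves, the greedy choice of $(u_i,v_i)$ as a diameter pair of $L$ guarantees that $P_T(u_i,v_i)$ is a longest path in $T_{[u_i,v_i]}$, which is exactly the hypothesis required by Lemma~\ref{secondo}.

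Combining the pieces, once the labeling is produced I pick $x\in V_k$ with the prescribed neighborhood, note that the induced subgraph on $\{x,y,u_1,v_1,\ldots,u_{2k-2},v_{2k-2}\}$ is isomorphic to $G_k$, and apply Lemma~\ref{secondo} with $t=k-1$. The lemma returns $t\geq k$, contradicting the assumption. The main obstacle is to be convinced that the greedy ``peel off the diameter'' truly delivers the longest-path condition simultaneously for every index $i$; the resolution is the simple observation that $[u_i,v_i]$ coincides exactly with the set of un-selected leaves at step $i$, which turns the required property into a plain diameter statement about the subtree spanned by $L$, and greedy maximization handles it step by step.
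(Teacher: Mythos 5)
Your proof is correct and follows essentially the same route as the paper: use the fact that every $(2k-1)$-subset of $U_k$ occurs as a neighborhood to pick $x$ after labeling $U_k$, arrange the labeling so that the hypothesis of Lemma~\ref{secondo} holds for the given tree, and conclude $t\geq k$. The only difference is that you spell out, via the greedy peeling of diametric pairs, the labeling argument that the paper compresses into ``by construction''; that verification (that $[u_i,v_i]$ is exactly the set of unlabeled leaves at step $i$, so the diameter pair of the spanned subtree works) is exactly what is needed and is sound.
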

\begin{proof}
Let $t$ be the minimum integer for which  $H_k$ is a \Int{t} and let $T$ be the corresponding tree.  Consider the tree $T$ and its subtree $T_{A_k}$, induced by the vertices in $A_k$. It is not difficult to see that it is possible to order the vertices in $A_k$ in the form $u_1, u_2 \ldots u_{2k-2}, y, v_{2k-2}, v_{2k-3}\ldots v_2, v_1$ such that for each $i$, $1\leq i\leq 2k-2$, the path $P_T(u_i,v_i)$ is the longest path in $T_{[u_i,v_i]}$. Indeed, we can start by taking a path of maximum length in $T_{A_k}$ and this will identify the pair of vertices $u_1, v_1$. Subsequently, we can proceed with the same process to discover the pair related to $u_2$ and $v_2,$ and continue this process as needed.

Notice that for any $x \in B_k$ the subgraph induced by $A_k \cup \{ x \}$ is isomorphic to $G_k$. Moreover, for $x\neq x'$ in $B_k$ the corresponding induced subgraphs are different. Hence, we have ${{4k-3}\choose{2k-1}}$ subgraphs isomorphic to $G_k$.  Hence there must exists a vertex $x$ in $B_k$ that is connected to exactly the subset of $2k-2$ vertices in $A_k$ corresponding to 
$\{y,u_{2j-1},v_{2j-1}| 1 \leq j\leq k-1\}$.
%such that the subgraph induced by $U_k \cup \{ x \}$ is $G_k$ and we have  that 
% $P_T(u_i,v_i)$ is the longest path in $T_{[u_i, v_i]}$, for each $i$, $1\leq i\leq 2k-2$. T
Thus by Lemma~\ref{secondo} it must be $t \geq k$.
\end{proof}

\noindent
From the previous theorem we have the following corollary.

\begin{corollary}
There exists no constant $k$ for which the \Int{k} class  contains all graphs.
\end{corollary}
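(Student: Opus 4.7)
The plan is to obtain this corollary as an immediate logical consequence of Theorem~\ref{theo:not-interval}; essentially no new combinatorial work is required, only a careful translation from the parametric statement of the theorem into the ``no constant suffices'' form of the corollary. I would argue by contradiction: suppose some finite constant $k \geq 1$ had the property that every graph belongs to \Int{k}. To derive a contradiction, it suffices to exhibit a single graph that is provably outside \Int{k}. The bipartite graph $H_{k+1}$ constructed just above Theorem~\ref{theo:not-interval} is precisely such a witness: applying that theorem with parameter $k+1$ (which satisfies the required hypothesis $k+1 \geq 2$) yields that $H_{k+1}$ is not a \Int{k}, contradicting the assumption.

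The only subtle point to keep track of is the index shift: Theorem~\ref{theo:not-interval} produces a graph outside \Int{(k-1)}, not outside \Int{k}, so to rule out \Int{k} I must invoke the theorem at index $k+1$ rather than at index $k$. Since the substantive combinatorial work has already been carried out in Lemma~\ref{secondo} and Theorem~\ref{theo:not-interval}, I do not expect any genuine obstacle here; the corollary is essentially a rewording of the theorem quantified over $k$. As an alternative to the contradiction phrasing, I would consider stating the argument directly in its contrapositive form: for every $k \geq 1$, the graph $H_{k+1}$ witnesses that \Int{k} fails to contain all graphs, so no universal constant $k$ can exist.
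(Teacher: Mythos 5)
Your proposal is correct and matches the paper's (implicit) argument: the corollary follows immediately from Theorem~\ref{theo:not-interval} by taking, for each candidate constant $k$, the witness graph $H_{k+1}$, which is not a \Int{k}. The index shift you flag is exactly the only point requiring care, and you handle it correctly.
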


It is worth to mention that, for $k=2$, $H_2$ corresponds to the bipartite graph on $15$ vertices introduced in \cite{YBR10} as the first graph proved to be outside the PCG class. Moreover, the graph $H_3$, consisting of $135$ vertices, is the first graph that is proved not to be a $\Int{2}$.

It remains an interesting open problem finding the smallest graph that is not a $\Int{2}$.

We conclude this section with an improvement of the result in \cite{multiintervalPCG} stating that every $m$ edge graph is an \Int{m}.

\begin{lemma}\label{lem:complement_interval}
Let $G$ be a \Int{k}; then its complement $\overline{G}$ is a \Int{(k+1)}.
\end{lemma}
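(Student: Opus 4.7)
The plan is to reuse the tree $T$ witnessing $G$ as a \Int{k} and to take for $\overline{G}$ the closed intervals that fill the $k+1$ regions complementary to $I_1,\ldots,I_k$. Concretely, write $G=k\text{-}PCG(T,I_1,\ldots,I_k)$ with $I_j=[a_j,b_j]$ sorted so that $b_j<a_{j+1}$, and let $D=\{d_T(u,v):u,v\in Leaves(T),\,u\ne v\}$, which is finite. Partition the nonnegative real line into the $k$ intervals $I_j$ together with the $k+1$ complementary regions $R_0=[0,a_1)$, $R_j=(b_j,a_{j+1})$ for $1\le j\le k-1$, and $R_k=(b_k,\mu(T)]$. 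For each $R_j$ with $D\cap R_j\neq\emptyset$, I define $J_j=[\min(D\cap R_j),\max(D\cap R_j)]$; otherwise I leave $J_j$ empty. By construction $J_j\subseteq R_j$, so the $J_j$'s are closed, pairwise disjoint, and disjoint from every $I_i$.

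I would then verify that $\overline{G}=(k+1)\text{-}PCG(T,J_0,\ldots,J_k)$: for any pair of distinct leaves $u,v$, the choice of endpoints ensures $d_T(u,v)\in J_j$ iff $d_T(u,v)\in R_j$, hence iff $d_T(u,v)\notin\bigcup_i I_i$, which is exactly the condition that $\{u,v\}$ is an edge of $\overline{G}$. If fewer than $k+1$ of the $J_j$'s turn out to be non-empty, I pad the list with dummy intervals such as $[\mu(T)+2i,\mu(T)+2i+1]$, which contain no leaf-pair distance and are disjoint from each other and from the $J_j$'s, so that the final collection consists of exactly $k+1$ disjoint closed intervals without altering the resulting graph.

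The only delicate point in the argument is ensuring that the complementary intervals are \emph{closed} and remain disjoint from the $I_j$'s, since naive complementation would produce half-open intervals sharing endpoints with the $I_j$'s. This is resolved by taking as endpoints the attained minimum and maximum of $D$ within each complementary region, which works precisely because $D$ is finite. I expect no other obstacles.
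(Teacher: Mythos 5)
Your proof is correct and follows essentially the same route as the paper: keep the tree $T$ and cover the complement with the $k+1$ closed intervals filling the gaps between the $I_j$'s. The only (minor) difference is how you make those gap intervals closed and disjoint from the $I_j$'s — the paper invokes Property~\ref{prop:interi} to assume integer endpoints and shifts by $\pm 1$, while you use the finiteness of the set of leaf-pair distances; both devices work.
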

\begin{proof}
Let $G=$ \Int{k}$(T,I_1,\ldots,I_k)$ and, by definition, the intervals $I_1, \ldots I_k$ are all disjoint; hence, if $I_i=[a_i,b_i]$, $1 \leq i \leq k-1$, then $a_i \leq b_i < a_{i+1}$.
From Property \ref{prop:interi} we can assume $a_i, b_i$, $1 \leq i \leq k$ are all integer values and $a_1 > 0$.
%Let $\mu$ be the maximum distance between any two leaves in $T$.
Consider the $k+1$ intervals $I'_1=[0,a_1-1], I'_2=[b_1+1, a_2-1], \ldots, I'_{k}=[b_{k-1}+1,a_k-1], I'_{k+1}=[b_k+1, \mu(T) +1]$; it is easy to see that $\overline{G}=$\Int{(k+1)}$(T,I'_1,\ldots,I'_{k+1})$.
\end{proof}

\begin{theorem}\label{theo:interval-general}
Let $G$ be a graph with $m$ edges and $n$ vertices and let $t=\min
\{m, \lceil \frac{n^2}{4}- \frac{n}{4}+\frac{1}{2} \rceil \}$  then $G$ is a \Int{t}.
\end{theorem}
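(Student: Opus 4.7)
The plan is to combine the known bound that every $m$-edge graph is a \Int{m} (from \cite{multiintervalPCG}) with the complementation Lemma~\ref{lem:complement_interval} proved just above. The idea is a standard ``which has fewer edges, $G$ or $\overline{G}$?'' argument: whichever of the two has at most roughly half of $\binom{n}{2}$ edges will give the better direct bound, and going from one to the other costs only one extra interval.

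Let $T_0 = \lceil n^2/4 - n/4 + 1/2\rceil = \lceil (n(n-1)+2)/4\rceil$, so that the claim is $t = \min\{m, T_0\}$. I would split into two cases according to whether $m \leq T_0$ or $m > T_0$.

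\textbf{Case 1:} $m \leq T_0$. Here I simply invoke the result of \cite{multiintervalPCG} to conclude that $G$ is an \Int{m}, and since an \Int{m} is trivially an \Int{m'} for every $m' \geq m$ (by repeating an interval, or by Property~\ref{prop:closure_pcg}-type reasoning), in particular $G$ is an \Int{t}.

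\textbf{Case 2:} $m > T_0$. Now I switch to the complement: $\overline{G}$ has $\overline{m} = \binom{n}{2} - m$ edges, and by \cite{multiintervalPCG} $\overline{G}$ is a \Int{\overline{m}}. Applying Lemma~\ref{lem:complement_interval}, $G = \overline{\overline{G}}$ is a \Int{(\overline{m}+1)}. It remains to check the arithmetic inequality $\overline{m}+1 \leq T_0$, i.e.\ $n(n-1)/2 - m + 1 \leq T_0$. Since $m$ and $T_0$ are integers and $m > T_0$, we have $m \geq T_0+1$, so it suffices to show $n(n-1)/2 - T_0 \leq T_0$, that is $2T_0 \geq n(n-1)/2$. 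This follows because $2T_0 \geq 2\cdot (n(n-1)+2)/4 = n(n-1)/2 + 1$.

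There is no real obstacle; the only thing to get right is the off-by-one in the ceiling, which is why the statement uses $\lceil \cdot \rceil$ and the ``$+1/2$'' inside. The proof is essentially a two-line case analysis once Lemma~\ref{lem:complement_interval} and the $m$-edge bound are available.
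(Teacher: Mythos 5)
Your proof is correct and follows essentially the same route as the paper's: apply the \Int{m} bound of \cite{multiintervalPCG} to whichever of $G$ and $\overline{G}$ is sparser, pay one extra interval via Lemma~\ref{lem:complement_interval} to return to $G$ in the second case, and check the arithmetic on the ceiling. The only nitpick is your parenthetical justification of monotonicity in $k$: ``repeating an interval'' is not allowed since Definition~\ref{def:interval} requires the intervals to be disjoint, but the fix is immediate (add a dummy disjoint interval containing no leaf-to-leaf distance), and the paper relies on the same monotonicity implicitly.
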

\begin{proof}
From \cite{multiintervalPCG}, $G$ is an \Int{m}   and $\overline{G}$ is an \Int{(\frac{n(n-1)}{2}-m)}.  From Lemma~\ref{lem:complement_interval} we have that $G$ is also an \Int{(\frac{n(n-1)}{2}-m+1)}. Hence, denoting $t=\min \{ m, \frac{n(n-1)}{2}-m+1\}$, $G$ is a \Int{t}.
Since if $m \leq \frac{n(n-1)}{2}-m+1$ then $m \leq \frac{n^2}{4}- \frac{n}{4}+\frac{1}{2}$, the result follows.
\end{proof}

Finally notice that, from Lemma~\ref{lem:complement_interval} when $k=1$, we have the following corollary.

\begin{corollary}\label{cor: pcg_compl}
$PCG \cup \overline{PCG} \subseteq ~$\Int{2}.
\end{corollary}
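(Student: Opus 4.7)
The plan is to split the argument into the two inclusions $PCG \subseteq $ \Int{2} and $\overline{PCG} \subseteq $ \Int{2}, and handle both by direct application of what has been established.

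For the first inclusion, recall that by Definition~\ref{def:interval}, when $k=1$ the class \Int{1} coincides with the PCG class. So any $G \in PCG$ is a \Int{1}, and it suffices to observe that \Int{1} $\subseteq$ \Int{2}: given $G = PCG(T,I)$, we can use the same tree $T$ together with the pair of disjoint intervals $(I, I')$, where $I'$ is any interval of nonnegative reals disjoint from $I$ and from every pairwise leaf distance in $T$ (e.g.\ $I' = [\mu(T)+1, \mu(T)+2]$, using Property~\ref{prop:interi} so that $\mu(T)$ is a natural number). No new edges are created, so $G$ is a \Int{2}.

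For the second inclusion, suppose $G \in \overline{PCG}$, so that $\overline{G}$ is a PCG, and hence a \Int{1} by the observation above. Now apply Lemma~\ref{lem:complement_interval} with $k=1$: the complement of a \Int{1} is a \Int{2}. Therefore $G = \overline{\overline{G}}$ is a \Int{2}. Combining the two cases yields $PCG \cup \overline{PCG} \subseteq $ \Int{2}.

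There is essentially no obstacle here; the corollary is a direct consequence of Lemma~\ref{lem:complement_interval} applied at $k=1$, together with the trivial monotonicity \Int{1} $\subseteq$ \Int{2}. The only minor point to be careful about is ensuring that the extra interval used in the trivial monotonicity step is genuinely disjoint from $I$ and contains no leaf-to-leaf distance, which is immediate from Property~\ref{prop:interi} allowing integer weights.
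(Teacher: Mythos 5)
Your proof is correct and follows essentially the same route as the paper, which derives the corollary directly from Lemma~\ref{lem:complement_interval} at $k=1$. You are in fact slightly more careful than the paper's one-line remark, since you explicitly justify the monotonicity \Int{1} $\subseteq$ \Int{2} (needed for the $PCG$ half of the union) by adding a dummy disjoint interval containing no leaf-to-leaf distance.
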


This implies that if a graph $G$ does not belong to the $\Int{2}$ class, then neither $G$ nor $\overline{G}$ is a PCG.

%%%%%%%%%%%%%%%%%%%%%%%%%%%%%%%%%%%%
\section{On OR-PCGs}\label{sec:or}

Recall that \Intg s are trivially \ORg s, and thus  Theorem~\ref{theo:interval-general} holds for \ORg s, too.  
In this section, we improve this result. 

We preliminarily recall a couple of definitions.
A graph $G$ is {\em edge covered} by graphs from a certain class $\mathcal{C}$ if it is possible to select some graphs from $\mathcal{C}$ such that all the edges of $G$ belong to some of the selected graphs, while all the non-edges do not.

Given a graph $G$, its {\em arboricity} is the minimum number of spanning forests needed to cover all the edges of $G$.

Notice that by Definition \ref{def:or},  a graph is in \OR{k} if and only if its  edges can be covered by $k$ PCGs. Thus, we can exploit the wide literature in graph edge covering ({\em e.g.} see the survey paper \cite{Knauer2016}). 

In particular, since forests are PCGs, the following lemma holds:\\
\begin{lemma}
\label{lemma:arboricity}
A graph with arboricity $a$ is in \OR{a}.   
\end{lemma}
% a graph with arboricity $a$ is 
%\st{({\em i.e.} the minimum number of spanning forests needed to cover all the edges of the graph is $a$) }

%We use this result in the proof of the next theorem.  

First, notice that any graph with at most 7 vertices is a PCG \cite{Calamoneri2012} and hence is a \OR{1}, so we assume $n \geq 8$.

\begin{theorem}\label{theo:or_general}
Any graph $G$ with $n\geq 8 $ vertices and maximum degree $\Delta$ is a \OR{\min\{\lceil \frac{3 \Delta +2}{5} \rceil, \lceil\frac{n-7}{3}\rceil +1 \}}.
\end{theorem}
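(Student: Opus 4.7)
The proof establishes the two upper bounds $\lceil(n-7)/3\rceil+1$ and $\lceil(3\Delta+2)/5\rceil$ independently; the theorem then follows by taking the minimum. My plan is to tackle each bound with a separate construction.

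For the vertex-size bound $\lceil(n-7)/3\rceil+1$, the idea is to cover $E(G)$ by one PCG per ``batch'' of vertices. I would partition $V$ as $V = V_0 \cup V_1 \cup \cdots \cup V_t$ with $|V_0|=7$ and $|V_i| \le 3$ for every $i \ge 1$, so that $t = \lceil(n-7)/3\rceil$. Let $G_0$ be the spanning subgraph of $G$ whose edges are those of $G[V_0]$, and for $i \ge 1$ let $G_i$ be the spanning subgraph whose edges are those of $G$ with at least one endpoint in $V_i$ and the other endpoint in $V_0 \cup \cdots \cup V_i$. Then $\{E(G_i)\}_i$ partitions $E(G)$, and it remains to check that each $G_i$ is a PCG on $V$. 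The subgraph $G_0$ has at most seven non-isolated vertices, so $G[V_0]$ is a PCG by the known classification of small graphs recalled just before the theorem; the remaining $n-7$ isolated vertices can be appended as pendant leaves of the underlying tree with sufficiently large weight so that all distances to them fall outside the interval. For $i \ge 1$, every edge of $G_i$ is incident to the hub set $V_i$ of size at most three, so I would establish as a key combinatorial lemma that every such ``$3$-hub graph'' is a PCG. Vertices outside $V_i$ that share the same neighborhood in $V_i$ (there are at most $2^3=8$ such types) are twins, and Property~\ref{prop:closure_pcg}(2) lets us add them one by one; this reduces the problem to an instance on at most $3+8=11$ non-isolated vertices, which I would handle by an explicit caterpillar construction whose spine passes through $V_i$, with edge weights and a single interval tuned to the internal edge pattern on $V_i$.

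For the degree-based bound $\lceil(3\Delta+2)/5\rceil$, I would use the arboricity approach advertised at the opening of Section~\ref{sec:or}: since every forest is a PCG, $G$ is a \OR{a(G)}, so it suffices to bound $a(G)$ in terms of $\Delta$. By Nash-Williams' formula $a(G) = \max_H \lceil |E(H)|/(|V(H)|-1) \rceil$ over subgraphs $H$, the bound $|E(H)| \le \Delta\,|V(H)|/2$ gives ratio at most $\Delta\,|V(H)|/(2(|V(H)|-1)) \le 3\Delta/5$ whenever $|V(H)| \ge 6$, while for $|V(H)| \le 5$ the ratio is at most $\lceil |V(H)|/2 \rceil \le 3$. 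Combining these two estimates and absorbing the ceiling overhead and the small-$\Delta$ boundary cases into the ``$+2$'' in the numerator yields the claimed bound.

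The main obstacle is the $3$-hub lemma used in the first step: tree metrics impose rigid constraints on where leaves of different neighborhood types can attach, since, for instance, a leaf adjacent to all three vertices of $V_i$ must sit close to the Steiner point of $V_i$ in the tree, while leaves adjacent to exactly two of them must lie in specific regions along the spine. Producing a single tree-plus-interval pair that simultaneously realizes every one of the $2^3$ external neighborhood types together with every one of the $2^{\binom{3}{2}}$ possible internal edge patterns on $V_i$ requires either a uniform parameterized construction or a modest case analysis on the internal structure of $V_i$; once this lemma is in place, the rest of the argument is routine.
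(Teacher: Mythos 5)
Your overall architecture is the same as the paper's: the two bounds are proved independently, the vertex bound comes from batching the vertices into triples of ``hubs'' plus a residual set of seven, and each hub-batch subgraph is shown to be a PCG. For the degree bound you take a slightly different route: the paper directly cites the result that a graph of maximum degree $\Delta$ can be covered by $\lceil\frac{3\Delta+2}{5}\rceil$ forests of paths, whereas you bound the ordinary arboricity via Nash--Williams; your computation in fact yields arboricity at most $\lceil\frac{\Delta+1}{2}\rceil\leq\lceil\frac{3\Delta+2}{5}\rceil$, so that half is sound (if left as a sketch).

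The genuine gap is the ``$3$-hub lemma'', which you correctly identify as the main obstacle and then defer to an unspecified caterpillar construction. Reducing by twins via Property~\ref{prop:closure_pcg}.2 alone leaves a core containing the three hubs plus one representative of each of the seven nonempty neighborhood types, i.e.\ up to ten non-isolated vertices; that is beyond the reach of the ``every graph on at most $7$ vertices is a PCG'' result, and simultaneously realizing all external types together with an arbitrary internal edge pattern on the hubs is precisely the hard step you have not carried out. The paper closes this with one further observation: the three types adjacent to exactly one hub consist of degree-$1$ vertices of $G_i$, so they need no representative in the core at all --- they can be attached afterwards by Property~\ref{prop:closure_pcg}.1 (closure under adding a pendant vertex). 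The core then consists of the three hubs plus one representative of each of the four types with at least two hub-neighbors, hence at most seven vertices, which is a PCG by the cited small-graph result; the remaining vertices are restored by Property~\ref{prop:closure_pcg}.1 and then Property~\ref{prop:closure_pcg}.2. With that observation your argument goes through; as written, the proof is incomplete exactly where you said it was.
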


\begin{proof}
It is known that if $G$ has maximum degree $\Delta$, then its edges can be covered by $\lceil \frac{3 \Delta +2}{5} \rceil$ forests of paths \cite{G86}.
Since forests of paths are trivially PCGs, in view of Definition \ref{def:second-def-OR}, $G$ is a \OR{\lceil \frac{3 \Delta +2}{5} \rceil}.

We now show that $G$ is also a \OR{\bigl(\lceil\frac{n-7}{3}\rceil +1\bigr)}. 
We  construct an edge cover with $\lceil\frac{n-7}{3}\rceil +1$ PCGs for $G$ and the result follows again from Definition \ref{def:second-def-OR}.
Consider any $3 \lceil \frac{n-7}{3}\rceil$ vertices of $G$ and partition them into $\lceil \frac{n-7}{3}\rceil$ triples $x_i, y_i$ and $z_i$.
For any $1\leq i \leq \lceil \frac{n-7}{3}\rceil$,  let $G_i=(V_i, E_i)$ be the subgraph  induced by the edges of $G$ incident to $x_i, y_i$ or $z_i$.  Let $G'$ be the subgraph of $G$ induced by the remaining (at most 7) vertices.
$G'$ is a PCG as it has at most 7 vertices, so the proof is concluded by showing that the $\lceil\frac{n-7}{3}\rceil$ graphs $G_i$ are PCGs.

Observe that the structure of every $G_i$ is as shown in Figure~\ref{fig:gadget}: besides vertices $x_i, y_i, z_i$, it contains:
\begin{itemize}
\item
three vertex sets $P^i_{x}$, $P^i_y$ and $P^i_z$ with all vertices adjacent only to $x_i$, only to $y_i$ or only to $z_i$, respectively; note that the vertices in each of these sets have degree 1;
\item 
three vertex sets, $S^i_{xy}$, $S^i_{xz}$ and $S^i_{yz}$ with all vertices adjacent to $x_i$ and $y_i$, $x_i$ and $z_i$, $y_i$ and $z_i$, respectively; note that the vertices in each of these sets have degree 2 and  have the same neighborhood;
\item
vertex set $S^i_{xyz}$ with all vertices adjacent to $x_i$, $y_i$ and $z_i$; note that the vertices in this set have degree 3 and have the same neighborhood.
\end{itemize}
Each one of all these sets may be empty and edges $\{ x_i, y_i\}$, $\{ x_i, z_i\}$ and $\{ y_i, z_i\}$ may be present or not.

\begin{figure}[ht!]
\centering
  \includegraphics[scale=0.7]{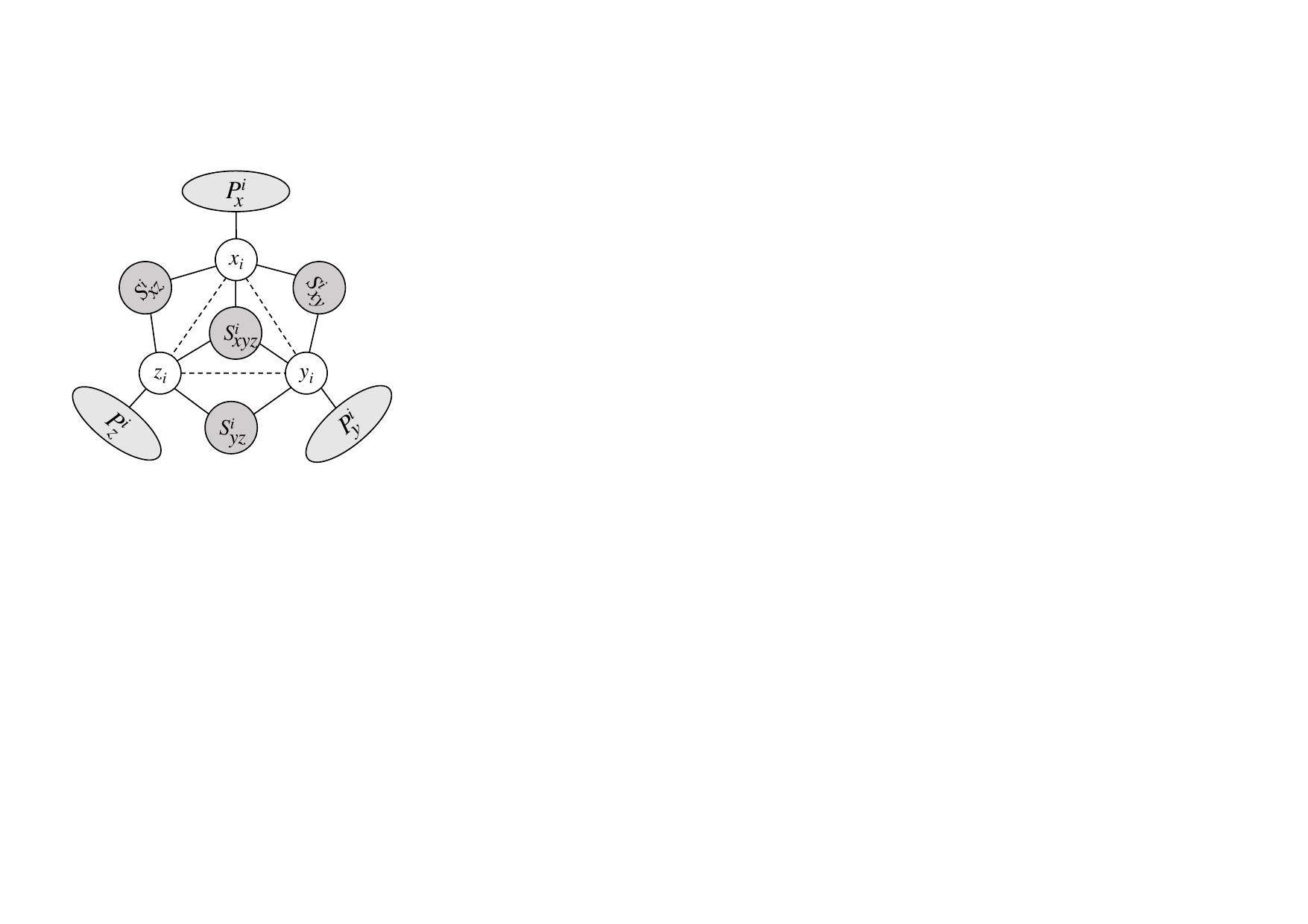}
\caption{The structure of graph $G_i$ in the proof of Theorem~\ref{theo:or_general}.}\label{fig:gadget}
 \end{figure}

Consider now the subgraph of $G_i$ induced by $x_i$, $y_i$, $z_i$, and one vertex (if any) from each set $S^i_{xy}$, $S^i_{xz}$, $S^i_{yz}$, $S^i_{xyz}$; this graph has at most 7 vertices and hence is a PCG.
In view of Property~\ref{prop:closure_pcg}.1, we can add to this graph all vertices in $P^i_x$, $P^i_y$, $P^i_z$ and all their incident edges in $G_i$, and we still get a PCG; finally, in view of Property \ref{prop:closure_pcg}.2, we add all remaining vertices in $S^i_{xy}$, $S^i_{xz}$, $S^i_{yz}$, $S^i_{xyz}$ and all incident edges in $G_i$, and we still get a PCG. Thus $G_i$ is a PCG.
\end{proof}

For particular graph classes the result in Theorem \ref{theo:or_general} can be improved.

\begin{theorem}The following statements hold:
\begin{enumerate}
\item Every connected graph with maximum degree at most $3$ is a \OR{2};
\item Every regular graph  with  even degree $\Delta$ is   a   \OR{\frac{\Delta}{2}};
\item Every bipartite regular graph  with  odd degree $\Delta$ is  a  \OR{\lceil \frac{\Delta}{2} \rceil}.
\end{enumerate}
\end{theorem}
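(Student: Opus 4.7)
\medskip
\noindent\textbf{Proof plan.} In view of Definition~\ref{def:second-def-OR}, showing that $G$ is a \OR{k} reduces to exhibiting a cover of $E(G)$ by $k$ PCG subgraphs on $V(G)$. All three items will follow by combining classical edge-decomposition theorems with the observation that certain simple graphs are PCGs.

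For item~1, I would invoke the Akiyama--Exoo--Harary theorem, which establishes the linear arboricity conjecture for graphs of maximum degree at most $3$: the edges of any such graph partition into two linear forests. Since linear forests are PCGs (an observation already used in the proof of Theorem~\ref{theo:or_general}), this gives $G$ as a \OR{2}.

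For items~2 and~3 I would rely on two further classical results. For item~2, Petersen's $2$-factor theorem partitions the edges of a $\Delta$-regular graph with $\Delta$ even into $\Delta/2$ edge-disjoint $2$-factors, each of which is a spanning $2$-regular subgraph, i.e., a disjoint union of cycles on $V(G)$. For item~3, K\"onig's edge-colouring theorem partitions the edges of a bipartite $\Delta$-regular graph into $\Delta$ perfect matchings; when $\Delta$ is odd I would pair them into $(\Delta-1)/2$ pairs, each union being a spanning $2$-regular bipartite subgraph (a disjoint union of even cycles), and set aside one leftover perfect matching. The leftover matching is a forest, hence trivially a PCG, so in both items it remains only to show that every spanning $2$-regular subgraph is a PCG.

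This auxiliary claim is what I expect to be the main obstacle. I would prove it in two stages. First, show that each individual cycle $C_n$ is a PCG: for $n \le 7$ this is immediate from \cite{Calamoneri2012}, while for larger $n$ one must provide an explicit tree-and-interval construction, for example a caterpillar-type tree whose leaves correspond to the vertices of $C_n$ in cyclic order with pendant weights chosen so that only cyclically consecutive leaves attain path length in the chosen interval $I=[a,b]$. Second, observe that the PCG class is closed under disjoint union once a common interval is used: given trees $T_1,\ldots,T_\ell$ realizing the individual cycles with the same $I=[a,b]$, join them through edges between interior vertices of weights exceeding $b$ plus twice the maximum internal leaf-to-root distance in each $T_i$; all cross-cycle leaf distances then strictly exceed $b$, so only the intended intra-cycle edges survive, and the combined tree realizes the full $2$-regular subgraph as a PCG.
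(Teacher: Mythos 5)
Your proposal is correct, and items 2 and 3 essentially coincide with the paper's argument, but item 1 and the level of detail differ in ways worth noting. For item 1 the paper does not use linear arboricity: it cites a decomposition of any connected subcubic graph into a spanning forest plus a subgraph whose components are $K_2$s or cycles, and then observes that both pieces are PCGs; your route via the Akiyama--Exoo--Harary linear arboricity bound (two linear forests, each trivially a PCG as a forest of paths) is equally valid and arguably cleaner, since it avoids having to handle disjoint unions of cycles in this item. For item 2 both you and the paper invoke Petersen's $2$-factor theorem. For item 3 the paper peels off a single perfect matching via Hall's theorem and then applies item 2 to the remaining $(\Delta-1)$-regular graph, whereas you first apply K\"onig's edge-colouring theorem and then pair the matchings; the arithmetic is the same, $(\Delta-1)/2+1=\lceil \Delta/2\rceil$, so the two are interchangeable. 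The one place where you add genuine value is the auxiliary claim that a \emph{disjoint union} of cycles is a PCG: the paper asserts this by citing only that a single cycle is a PCG, while you correctly identify it as the step needing justification and sketch the standard normalize-the-intervals-then-join-the-trees-by-heavy-edges argument (the interval normalization you assume is exactly the technique used in the paper's Theorem~1, so it is available). Your sketch is sound; the only minor care needed is that a component tree with no interior vertex (e.g., for a $K_2$ component) must first be subdivided before attaching the heavy joining edge.
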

\begin{proof} 
We prove separately the statements.
\begin{enumerate}
\item 
It is known that the edges of any connected graph with degree at most $3$ can be partitioned into a spanning forest and a subgraph whose connected components are either $K_2$s or cycles \cite{AJS15}. 
The result follows from observing that a forest is a PCG and the same holds for the disjoint union of edges and cycles. 
\item
Petersen \cite{P1891} proved that the edges of every regular graph with even degree $\Delta$ can be covered with $\frac{\Delta}{2}$ sets of vertex disjoint cycles; the result follows as a cycle is a PCG \cite{YBR10}.
\item
We exploit the Hall's marriage theorem proving that a bipartite regular graph always contains a perfect matching (that is a special forest and hence is a PCG);
by removing it from the graph, we get a regular graph with even degree $\Delta -1$ and the result in item 2 of this theorem can be used. 
\end{enumerate}
\vspace*{-.6cm}
\end{proof}

Next we focus on the class of planar graphs. In \cite{BCMP19,DMR15} it is shown that not all planar graphs are PCGs while it is not known whether they are a subclass of \Int{2}. We prove that planar graphs are in \OR{3}. 
It is hence interesting to study which subclasses of planar graphs are in \OR{2} and which superclasses of planar graphs are in  \OR{4}.

For the sake of completeness, before stating our results, we give a brief summary of the definitions of the considered classes and of the inclusion relations among them.

{\em Planar graphs} can be drawn on the plane in such a way that no edges cross each other. Equivalently, planar graphs do not contain  $K_5$ or $K_{3,3}$ as minors  \cite{Wagner1937}. (Graph $H$ is a {\em minor} of graph $G$ if $H$ can be formed from $G$ by deleting edges and vertices and by contracting edges.)

{\em Series-parallel} graphs  do not contain $K_4$ as a minor \cite{Eppstein1992}. 
Since $K_4$ is a minor of both $K_5$ and $K_{3,3}$, series-parallel graphs are planar graphs.
It has been proved that a strict subclass of series-parallel graphs ({\em i.e.} SQQ series-parallel graphs) is in \Int{2} \cite{multiintervalPCG} while it remains an open problem whether the whole class of series-parallel graphs is in \Int{2}. We prove that series-parallel graphs are in \OR{2}.

Finally, {\em 1-planar graphs} are graphs that can be drawn in the plane such that any edge intersects with at most one other edge. 
They are a superclass of planar graphs.
We prove that they are in \OR{4}.

\begin{theorem} 
\label{th.OR_planar}
The following statements hold:

\begin{enumerate}
\item Every planar graph is a \OR{3};
\item Every triangle-free planar graph  is a \OR{2};
\item Every series-parallel graph  is a \OR{2};
\item Every 1-planar graph is a \OR{4}.
%\item every outerplanar graph is \OR{2} (e' conseguenza di quella del punto $3$)
\end{enumerate}
\end{theorem}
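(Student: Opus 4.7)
The unifying strategy for all four items is to bound the \emph{arboricity} $a(G)$ of $G$ (the minimum number of spanning forests whose union is $G$) and then invoke the observation, already noted at the beginning of this section, that every forest is a PCG. By Definition~\ref{def:second-def-OR}, if $a(G) \le k$ and $F_1, \ldots, F_k$ are forests covering $E(G)$, then $G=\,$\OR{k}$(F_1, \ldots, F_k)$. Recall Nash-Williams' formula $a(G) = \max_H \lceil |E(H)|/(|V(H)|-1) \rceil$, with the maximum ranging over subgraphs $H$ with at least two vertices. Each of the four classes of interest is closed under taking subgraphs: this is immediate for planar, triangle-free planar and $1$-planar graphs, since a drawing of $G$ restricts to a drawing of any subgraph; for series-parallel graphs it follows from the fact that this class is characterized by the forbidden minor $K_4$ and is therefore minor-closed.

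Given subgraph closure, the plan is to combine Nash-Williams' formula with the standard edge bound of each class. Concretely: for item (1) I would invoke Euler's formula to conclude that every planar graph satisfies $|E| \le 3|V|-6$, so $|E(H)|/(|V(H)|-1) < 3$ for every subgraph $H$ and $a(G) \le 3$; for item (2) Euler's formula together with the girth-$4$ condition gives $|E| \le 2|V|-4$, yielding $a(G) \le 2$; for item (3) the well-known bound $|E| \le 2|V|-3$ for series-parallel graphs gives $a(G) \le 2$; for item (4) the edge bound $|E| \le 4|V|-8$ for $1$-planar graphs gives $a(G) \le 4$. The few corner cases with very small $|V(H)|$ are handled by the trivial bound $|E(H)| \le \binom{|V(H)|}{2}$.

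The main obstacle is essentially non-existent once the arboricity viewpoint is adopted: all four edge-count bounds are classical, and their translation through Nash-Williams' formula is mechanical. The only point requiring explicit verification is closure under subgraphs, so that the formula may be applied to \emph{every} $H \subseteq G$; this is automatic for the three planar-type classes and follows from minor-closedness for series-parallel graphs. Consequently the whole theorem reduces to a clean, class-by-class arboricity computation plus the appeal to ``forests are PCGs''.
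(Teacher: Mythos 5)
Your proof is correct, and for items (1) and (2) it is exactly the paper's argument: Nash--Williams' formula combined with the Euler edge bounds $3n-6$ and $2n-4$, plus the observation that forests are PCGs. The differences are in items (3) and (4). For series-parallel graphs the paper simply cites a known result that the edges can be covered by two forests, whereas you rederive the arboricity bound from Nash--Williams and the edge count $|E|\le 2|V|-3$ together with minor-closedness; these are equivalent in substance, and your version is a bit more self-contained. For 1-planar graphs the routes genuinely diverge: the paper invokes a structural decomposition of a 1-planar graph into a planar subgraph plus a forest and then covers the planar part with three forests ($3+1=4$), while you apply Nash--Williams uniformly using the classical bound $|E|\le 4|V|-8$ and subgraph-closure of 1-planarity. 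Both yield arboricity at most $4$; your argument has the advantage of uniformity across all four items and avoids the decomposition theorem, while the paper's decomposition makes the constant $4$ transparent as ``planar plus one.'' Your attention to the small-$|V(H)|$ corner cases (where the linear edge bounds are vacuous or negative) is a detail the paper glosses over but that a fully rigorous Nash--Williams computation does require.
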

\begin{proof} We prove the statements separately.
\begin{enumerate}
\item 
%by euler's planarity theorem any planar graph with $n$ vertices has at most 
%$3n-6$ edges, from which it follows 
From  Nash-Williams' formula \cite{N61} and Euler's planarity theorem, we have that planar graphs have arboricity at most 3. 

\item From Nash-Williams' formula and Euler's planarity theorem we have that
planar triangle free graphs have arboricity at most $2$.

\item  The statement follows by the fact that the edges of any series-parallel graph can be covered by two forests \cite{BKPY05}.
\item   Given a  1-planar graph, there is a partition of  its edges into two subsets $A$ and $B$  such that  $A$ induces a planar graph and  $B$ induces a forest \cite{A14}.
The statement follows from the fact that, in turn, the planar graph can be covered by at most 3 forests.
\end{enumerate}
The application of Lemma~\ref{lemma:arboricity} concludes the proof.
\end{proof}

We conclude this section with an observation on {\em outerplanar graphs}, {\em i.e.} planar graphs that can be drawn in the plane so that all vertices are on the same face \cite{outerplanar}.
%It is worth to note that 
It is not known whether these graphs are in PCG, but a strict subclass ({\em i.e.} triangle-free outerplanar 3-graphs) has been proved to be in PCG \cite{SRH13}.
An equivalent definition of outerplanar graphs characterizes them as graphs  that do not contain either $K_4$ or $K_{2,3}$ as minors; hence, they are a subclass of series-parallel graphs.
From our result on series-parallel graphs in Theorem \ref{th.OR_planar}, we have that outerplanar graphs are in \OR{2}.

%%%%%%%%%%%%%%%%%%%%%%%%%%%%%%%%%%%%%%%%%%%%%%%%%%%%%%%%%%%%%%%%%%%
\section{On AND-PCGs}\label{sec:and}

The {\em intersection dimension} of a graph $G$ with respect to a class of graphs $\mathcal{C}$ is the minimum $k$ such that $G$ is the intersection of some $k$ graphs belonging to $\mathcal{C}$ \cite{Kratochvil1994}.
From Definition~\ref{def:second-def-AND}, the following lemma holds: \\
\begin{lemma}
\label{lemma:intersection}
    A graph $G$ is a \AND{k}\ if and only if the intersection dimension of  $G$ 
% \cite{Kratochvil1994}
with respect to the PCG class is at most $k$. 
\end{lemma}

Thus, we can exploit the literature on the intersection dimension of graphs.  Among the different specializations of intersection dimension, the most well-known
is the \emph{boxicity} (\textit{i.e.} the intersection dimension with respect to the class of interval graphs) \cite{Roberts}. An {\em interval graph} is an undirected graph formed from a set of intervals on the real line, with a vertex for each interval and an edge between vertices whose intervals intersect \cite{Brandstdt1999}. 
Since interval graphs are known to be in PCG \cite{Brandstand2008}, 
in view of Lemma \ref{lemma:intersection}, it holds that 
\begin{corollary}\label{cor:boxicity}
A graph with boxicity $b$ is a \AND{b}.
\end{corollary}

The following lemma is based on  Definition~\ref{def:second-def-AND}.

\begin{lemma}\label{lem:and_complement}
A graph $G=(V,E)$ is a \OR{k}$(G_1,\ldots, G_k)$ with $G_i \in mLPG \cup LPG$ for all $i$, $1\leq i \leq k$ if and only if $\overline{G}$ is a \AND{k}$(\overline{G_1},\ldots, \overline{G_k})$ with $\overline{G_i} \in mLPG \cup LPG$ for all $i$, $1\leq i \leq k$. 
\end{lemma}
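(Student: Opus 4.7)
My plan rests on two elementary ingredients that I would combine by symmetry. The first is De~Morgan's law applied to edge sets viewed as subsets of $\binom{V}{2}$: one has $E(G) = \bigcup_{i=1}^k E(G_i)$ if and only if $E(\overline{G}) = \bigcap_{i=1}^k E(\overline{G_i})$. Hence, writing $G$ as a \OR{k}$(G_1,\ldots,G_k)$ is combinatorially the same act as writing $\overline{G}$ as a \AND{k}$(\overline{G_1},\ldots,\overline{G_k})$, and vice versa. In other words, the union/intersection side of Definitions~\ref{def:second-def-OR} and~\ref{def:second-def-AND} carries exactly the same information up to complementing each of the $k$ component graphs.

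The second ingredient is a complement-at-the-tree-level correspondence between $LPG$s and $mLPG$s: for any tree $T$ and threshold, the complement of $LPG(T, d_{max})$ is $mLPG(T, d_{max}+1)$, and the complement of $mLPG(T, d_{min})$ is $LPG(T, d_{min}-1)$, provided the edge weights of $T$ are natural numbers so that all pairwise distances are integers and the threshold can be shifted by exactly $1$. By Property~\ref{prop:interi} this assumption is always available, and the degenerate boundary cases (empty or complete graph) are handled separately by choosing a threshold outside the range of pairwise distances in $T$. Thus the class $LPG \cup mLPG$ is closed under complementation, and crucially the complement of a graph in this class admits a representation on the \emph{same} underlying tree $T$.

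Combining the two ingredients yields the statement. In the forward direction, from $G = \OR{k}(G_1,\ldots,G_k)$ with each $G_i \in LPG \cup mLPG$, De~Morgan gives $\overline{G} = \AND{k}(\overline{G_1},\ldots,\overline{G_k})$, and the complementation correspondence shows each $\overline{G_i} \in mLPG \cup LPG$. The backward direction is identical by symmetry, since complementation is an involution both on the graph $G$ and on the class $LPG \cup mLPG$. The only real subtlety is the boundary behavior of the LPG/mLPG thresholds, where one must avoid strict-versus-non-strict inequality confusion; this is precisely what Property~\ref{prop:interi} is for, and it is the step where I would be most careful. Everything else is a purely formal application of set complementation.
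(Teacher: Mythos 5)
Your proof is correct and follows essentially the same route as the paper's: De~Morgan's law on the edge sets gives the union/intersection duality, and the closure of $LPG \cup mLPG$ under complementation (which the paper simply cites from the literature, while you sketch the threshold-shifting argument on the same tree) handles the membership of each $\overline{G_i}$. No gaps worth noting.
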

\begin{proof}
A graph  $G_i$ is in $ mLPG \cup LPG$ if and only if $\overline{G_i}$ is in $mLPG \cup LPG$ as the union of the classes mLPG and LPG is closed with respect to the complement \cite{CP12}. 
The following implications hold:

$$
e \in E(\overline{G}) \Longleftrightarrow e \not\in E(G) \Longleftrightarrow e \not \in \cup_{i} E(G_i) \Longleftrightarrow e \in \cap_{i} E(\overline{G_i}).
$$

Hence, $G=~$\OR{k}$(G_1,\ldots, G_k)$  if and only if  $\overline{G}=~$\AND{k}$(\overline{G_1},\ldots, \overline{G_k})$.
\end{proof}

\begin{theorem}
\label{theo:AND}
Any graph $G=(V,E)$ with $n$ vertices, minimum degree $\delta$ and maximum degree $\Delta$ is a \AND{\min \{\lfloor \frac{n}{2}\rfloor,\lceil\frac{3(n-\delta)-1}{5}\rceil,\Delta \log^{1+o(1)}\Delta\}}.
%$k=\min\{arb(\overline{G}), \}$
\end{theorem}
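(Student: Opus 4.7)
The overall plan is to prove each of the three upper bounds inside the minimum independently; the theorem then follows by taking the smallest.

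The bounds $\lfloor n/2 \rfloor$ and $\Delta \log^{1+o(1)}\Delta$ will be obtained through boxicity. As already noted in the paragraph preceding Lemma \ref{lem:and_complement}, interval graphs are PCGs, so any graph of boxicity $b$ is automatically a \AND{b}. The first term then follows from Roberts' classical result that every $n$-vertex graph has boxicity at most $\lfloor n/2 \rfloor$, and the third term from the Scott--Wood-type bound $\textnormal{box}(G) = \Delta \log^{1+o(1)}\Delta$ for a graph of maximum degree $\Delta$.

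The middle bound $\lceil (3(n-\delta)-1)/5\rceil$ will be obtained by dualising via Lemma \ref{lem:and_complement}. Since $G$ has minimum degree $\delta$, the complement $\overline{G}$ has maximum degree $n-1-\delta$, and applying the first half of Theorem \ref{theo:or_general} to $\overline{G}$ expresses it as an \OR{k}, with $k = \lceil (3(n-1-\delta)+2)/5 \rceil = \lceil (3(n-\delta)-1)/5 \rceil$, whose $k$ covering PCGs are all forests of paths. A forest of paths is easily seen to be an LPG (take the disjoint union of the path-components themselves as the host tree, glued together by edges of sufficiently large weight so that only intra-component leaf pairs fall within a suitably chosen $d_{max}$), hence the $k$ covering subgraphs all lie in $\textnormal{mLPG}\cup\textnormal{LPG}$. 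Lemma \ref{lem:and_complement} then converts this \OR{k} representation of $\overline{G}$ into an \AND{k} representation of $G$, as required.

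The main subtlety to handle is that only the first (forest-of-paths) half of Theorem \ref{theo:or_general} can be dualised through Lemma \ref{lem:and_complement}: the second half, giving $\lceil (n-7)/3\rceil+1$, is based on arbitrary small PCGs that need not belong to $\textnormal{mLPG}\cup\textnormal{LPG}$, so the duality is not available there. A short separate check is also required for $n<8$, where Theorem \ref{theo:or_general} is not directly stated, but in that regime every graph is already a PCG and thus a \AND{1}, which is at most each of the three claimed bounds.
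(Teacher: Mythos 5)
Your proposal is correct and follows essentially the same route as the paper: the first and third bounds via boxicity (Roberts and Scott--Wood, combined with interval graphs being PCGs), and the middle bound by applying the path-forest cover of Theorem~\ref{theo:or_general} to $\overline{G}$ (maximum degree $n-1-\delta$) and dualising through Lemma~\ref{lem:and_complement}, since forests of paths are LPGs. Your extra remarks on the $n<8$ case and on why only the path-forest half of Theorem~\ref{theo:or_general} dualises are sensible refinements the paper leaves implicit.
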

\begin{proof}
The result $\lfloor \frac{n}{2}\rfloor$ follows from the same value of the boxicity of any $n$ vertex graph \cite{Roberts} and from Corollary~ \ref{cor:boxicity}.

The value $\lceil\frac{3(n-\delta)-1}{5}\rceil$ follows directly by combining   Lemma \ref{lem:and_complement} with  Theorem \ref{theo:or_general} and observing that forests of paths are LPGs and that the maximum degree of $\overline{G}$ is equal to $n-1-\delta$.

The value $\Delta \log^{1+o(1)}\Delta$ comes from Corollary~ \ref{cor:boxicity} and from the result in \cite{Scott2019} that graphs with maximum degree $\Delta$ have boxicity upper bounded by $\Delta \log^{1+o(1)}\Delta$. 
\end{proof}

In the particular case of planar graphs we can improve the result of Theorem~\ref{theo:AND}:

\begin{theorem}
The following statements hold:
\begin{enumerate}
    \item Every planar graph is a \AND{3};
    \item Every outerplanar graph is a \AND{2}.
\end{enumerate}
\end{theorem}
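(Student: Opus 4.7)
The plan is to leverage the observation already recorded at the start of Section~\ref{sec:and}, namely that every graph with boxicity $b$ is a \AND{b} (because interval graphs are PCGs, via \cite{Brandstand2008}). Given this, both items reduce to quoting existing boxicity bounds for the two graph classes under consideration, with no direct tree-and-interval construction required.

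For item 1, I would invoke the classical theorem of Thomassen that every planar graph has boxicity at most $3$. Concretely, given a planar $G$, one writes $G = H_1 \cap H_2 \cap H_3$ with each $H_i$ an interval graph on $V(G)$; each $H_i$ is then a PCG, so Definition~\ref{def:second-def-AND} immediately yields that $G$ is a \AND{3}. For item 2, I would quote Scheinerman's classical result that every outerplanar graph has boxicity at most $2$, and apply exactly the same argument with two interval graphs in place of three.

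The main (non-)obstacle here is essentially bibliographic: once the boxicity bounds of Thomassen and Scheinerman are in hand, both statements are immediate corollaries of the interval-graph remark at the top of the section. It is worth noting why the complement-based route via Lemma~\ref{lem:and_complement} does not help: that lemma produces an AND-decomposition of $\overline{G}$ from an OR-decomposition of $G$ into $mLPG \cup LPG$ pieces, whereas here we want an AND-decomposition of $G$ itself, and planarity (or outerplanarity) is not preserved under complementation. The boxicity route sidesteps this entirely and gives the claimed bounds $3$ and $2$ directly.
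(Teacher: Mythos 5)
Your proposal matches the paper's proof exactly: the paper also derives both items by combining the remark that boxicity $b$ implies \AND{b} (since interval graphs are PCGs) with Thomassen's bound of $3$ for planar graphs and Scheinerman's bound of $2$ for outerplanar graphs. No gaps; your additional remark on why Lemma~\ref{lem:and_complement} is not the right tool here is a correct observation but not needed.
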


\begin{proof}
The results can be directly derived by Corollary~ \ref{cor:boxicity} and observing that every planar graph has boxicity at most   $3$  \cite{Thomassen1986} and every outerplanar graph has boxicity at most $2$ \cite{Scheinerman1984}.
\end{proof}
%Notice that the constraint on $G$ of  Lemma~\ref{lem:and_complement} is much stronger than the simple requirement that $G$ is a $\OR{k}$. Indeed, requiring  only that $G$ is a $\OR{k}$ does not directly imply that we can say something about the value of $k'$ for which $\overline{G}$ is in $\AND{k'}$. Indeed, when $G$ is a $\OR{k}$

Notice that if the hypothesis of Lemma \ref{lem:and_complement} does not hold (\textit{i.e.} $G_i \not \in mLPG \cup LPG$) we have the following weaker result: 

\begin{theorem}\label{theo: and_complement_to_or} 
If a  graph $G$ is a \AND{k}, its complement graph $\overline{G}$ is a \OR{2k}.
\end{theorem}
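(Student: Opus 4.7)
The plan is to reduce the statement to a direct application of De Morgan's law together with Corollary~\ref{cor: pcg_compl}. Suppose $G$ is a \AND{k}, so by Definition~\ref{def:second-def-AND} there exist PCGs $G_1,\ldots,G_k$ on the common vertex set $V$ with $E(G)=\bigcap_{i=1}^{k}E(G_i)$. First, I would take complements on both sides: a non-edge of $G$ is exactly a pair $\{u,v\}$ that fails to be an edge of at least one $G_i$, so
\[
E(\overline{G}) \;=\; \overline{\bigcap_{i=1}^{k} E(G_i)} \;=\; \bigcup_{i=1}^{k} E(\overline{G_i}).
\]
Thus $\overline{G}$ is the union of the $k$ graphs $\overline{G_1},\ldots,\overline{G_k}$, each of which is the complement of a PCG.

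Next, I would invoke Corollary~\ref{cor: pcg_compl}, which states that $PCG \cup \overline{PCG} \subseteq \Int{2}$. Since every $G_i$ is a PCG, every $\overline{G_i}$ is in $\overline{PCG} \subseteq \Int{2}$, and in particular in $\OR{2}$ (because \Int{k} is a subclass of \OR{k}: the same tree with more than one interval can be realized by taking several identical copies of the tree in the \ORg\ definition). Hence for every $i$ there exist two PCGs $H_{i,1}, H_{i,2}$ on vertex set $V$ such that $E(\overline{G_i})=E(H_{i,1})\cup E(H_{i,2})$.

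Finally, I would combine these decompositions. Substituting into the union above,
\[
E(\overline{G}) \;=\; \bigcup_{i=1}^{k}\bigl(E(H_{i,1})\cup E(H_{i,2})\bigr) \;=\; \bigcup_{i=1}^{k}\bigcup_{j=1}^{2} E(H_{i,j}),
\]
which exhibits $\overline{G}$ as the union graph of the $2k$ PCGs $\{H_{i,j}\}_{1\le i\le k,\,1\le j\le 2}$. By Definition~\ref{def:second-def-OR}, this means $\overline{G} \in \OR{2k}$, as required.

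There is no real obstacle: the argument is essentially De Morgan plus Corollary~\ref{cor: pcg_compl}. The only point that deserves a short justification is the inclusion $\Int{k}\subseteq \OR{k}$ that is used in the second step, but this is already noted in the paper right after Definition~\ref{def:second-def-OR}, so it can be cited rather than reproved.
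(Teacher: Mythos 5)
Your proof is correct, and it ends with exactly the same cover by $2k$ PCGs as the paper's: for each $G_i=PCG(T_i,[a_i,b_i])$ the complement $\overline{G_i}$ is realized as the union of the two PCGs on $T_i$ whose intervals lie below $a_i$ and above $b_i$. What differs is the route. The paper first rewrites each $G_i$ as the intersection of an LPG and an mLPG, so that $G$ becomes a \AND{2k} all of whose factors lie in $mLPG \cup LPG$, and then applies Lemma~\ref{lem:and_complement}, which relies on the closure of $mLPG \cup LPG$ under complementation. You instead apply De Morgan directly to the $k$ given PCGs and then quote Corollary~\ref{cor: pcg_compl} (equivalently, Lemma~\ref{lem:complement_interval} with $k=1$) together with the observation, already made after Definition~\ref{def:second-def-OR}, that every \Int{2} is a \OR{2}. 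Your version is slightly shorter and bypasses the mLPG/LPG machinery altogether; the paper's version has the side benefit of exhibiting the $2k$ covering graphs explicitly as members of $mLPG \cup LPG$, which is the form required by the hypothesis of its Lemma~\ref{lem:and_complement}. Either way the argument is sound and complete.
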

\begin{proof}
First note that, any graph $H=PCG(T, I)$ with $I=[a,b]$ can be expressed as a \AND{2}($H^1, H^2$) where $H^1=PCG(T,I^1)$ with $I^1=[0, b]$ and $H^2=PCG(T, I^2)$ with $I^2=[a, \mu(T)]$, hence $H^1, H^2$ are both in $mLPG \cup LPG$.

Exploiting this fact, from $G=~$\AND{k}$(G_1, \ldots , G_k)$ we deduce that we also have $G=~$\AND{2k}$(G_1^1$, $G_1^2$, $\ldots$ , $G_k^1$, $G_k^2)$ where $G_i^1$, $G_i^2$ are in $mLPG \cup LPG$.
Finally, from Lemma \ref{lem:and_complement}, it follows that $\overline{G}=~$\OR{2k}$(\overline{G_1}^1, \overline{G_1}^2, \ldots , \overline{G_k}^1, \overline{G_k}^2)$.
%Let $(T_i,I_i)$ for $i\in [k]$ be the construction for $k$-AND-PCG. The construction for $2k$-OR-PCG comes as follows: for each $(T_i,I_i)$ with $I_i=[a_i,b_i]$, we create two pairs $(T_i,I'_i)$ and $(T_i,I''_i)$ where $I'_i=[0,a_i-1]$ and $I''_i=[b_i+1, MAX]$ where max is the maximum pairwise distances between the leaves of $T_i$.
\end{proof}

Trivially, $PCG\subseteq~\OR{2}$; it is well known that there are graphs in \Int{2}  but not in PCG \cite{multiintervalPCG}, so implying that
$PCG\subsetneq~\OR{2}$.
Now we will show that
 $PCG\subsetneq \,\,\AND{2}.$

\begin{theorem}
$PCG\subsetneq \,\,\AND{2}\,\,  \cap \,\, \Int{2} $.
\end{theorem}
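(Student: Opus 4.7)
The statement splits into two parts: the inclusion $PCG \subseteq \AND{2} \cap \Int{2}$, and the strictness of this inclusion. The inclusion is immediate in both directions. For the $\AND{2}$ side, any PCG $G$ satisfies $G = G \cap G$, which realizes $G$ as an \AND{2} via Definition~\ref{def:second-def-AND}. For the $\Int{2}$ side, if $G = PCG(T,I)$, I would simply pick a second interval $I'$ disjoint from $I$ and lying strictly above $\mu(T)$; no pair of leaves of $T$ has distance in $I'$, so the pair $(T, I, I')$ witnesses $G \in \Int{2}$.

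The strict inclusion requires exhibiting a graph $G \in \AND{2} \cap \Int{2}$ with $G \notin PCG$. My plan is to reuse the graph of Figure~\ref{y}, borrowed from \cite{DMR15}, which is known not to be a PCG and has already been presented in the paper as $\Int{2}(T, [1,3], [5,6])$. With the $\Int{2}$ side already in hand, the remaining work is to show the same graph $G$ is also in $\AND{2}$, i.e., to produce two PCGs $G_1$ and $G_2$ with $G = G_1 \cap G_2$.

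Equivalently, I would partition the non-edges of $G$ into two sets $F_1, F_2$ and construct PCG supergraphs $G \cup F_1$ and $G \cup F_2$, each realized by an explicit tree with integer weights (justified by Property~\ref{prop:interi}) and an associated interval. Since $G$ is small, each candidate tree can be drawn by hand and its pairwise leaf distances tabulated and compared to the chosen interval to verify that the resulting PCG coincides with $G \cup F_i$. Taking the intersection of the two resulting PCGs then recovers exactly the edge set of $G$, establishing $G \in \AND{2}$.

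The main obstacle is the ad-hoc combinatorial search in this last step: the $\Int{2}$ representation of $G$ does not translate into an $\AND{2}$ representation in any automatic way, because the two classes arise from different set-theoretic operations on the underlying PCGs. Partitioning $\overline{E(G)}$ into two subsets each of which closes $G$ to a PCG requires some case analysis, and each completion must be accompanied by an explicit tree and interval whose pairwise distances are verified. Once such $G_1, G_2$ are produced, the conclusion $G \in \AND{2}$ follows immediately from Definition~\ref{def:second-def-AND}, and combining this with the already-established $G \in \Int{2}$ and $G \notin PCG$ yields the strict inclusion.
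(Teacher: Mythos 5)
Your treatment of the inclusion $PCG \subseteq \AND{2} \cap \Int{2}$ is fine, and your choice of witness (the graph of Figure~\ref{y}(a), already known to be a non-PCG that is a \Int{2}) matches the paper's. The genuine gap is in the one step that actually carries the theorem: membership of that graph in \AND{2}. You correctly identify that the \Int{2} representation gives no automatic \AND{2} representation, and you then describe a \emph{plan} --- partition the non-edges into $F_1, F_2$, close $G$ to two PCG supergraphs, exhibit explicit trees and intervals, tabulate distances --- but you never produce the partition, the trees, or the intervals, and you yourself flag this search as ``the main obstacle.'' As written, the proof of strictness is therefore incomplete: nothing guarantees in advance that such a two-part partition of $\overline{E(G)}$ into PCG-completions exists, so the existence claim cannot be waved through.

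The paper closes this gap in two ways, both of which avoid any distance tabulation. First, it simply invokes its general bound (Theorem~\ref{theo:AND}): the witness is $5$-regular on $8$ vertices, so $\lceil\frac{3(n-\delta)-1}{5}\rceil = \lceil 8/5\rceil = 2$ gives \AND{2} immediately. Second, for the explicit route you were attempting, it sidesteps the tree constructions by starting from $7$-vertex supergraphs $G_1, G_2$ (every graph on at most $7$ vertices is a PCG) and then adding one false twin to each via Property~\ref{prop:closure_pcg}.2, which preserves PCG-ness; the two resulting $8$-vertex PCGs intersect exactly in $G$. If you want to salvage your argument, either apply Theorem~\ref{theo:AND} directly, or replace ``draw a tree and tabulate distances'' with this combination of the small-graph fact and the twin closure property --- those are the tools that make the explicit construction finite and checkable.
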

\begin{proof}
Trivially $PCG \subseteq$ \AND{2} and $PCG \subseteq$ \Int{2}.

Consider  the regular graph $G$ in Figure~\ref{y}(a), that is not in PCG \cite{DMR15} but is in \Int{2} \cite{multiintervalPCG}. 
To conclude the proof, we show that $G$ is in \AND{2} $\cap$ \Int{2}. 
We show this fact in two different ways: in the first one the graphs involved in the intersection are interval graphs, while in the second one they are not.

\smallskip

Applying Theorem~\ref{theo:AND} with $n=8$ and  $\delta=5$ we immediately obtain that $G$ is a \AND{2} and, as its proof is conceived,  the graphs involved in the intersection are interval graphs.

\smallskip

Alternatively, consider the graph $G_1$ depicted in Figure~\ref{x}(a); $G_1$ is a PCG as it has 7 vertices.
We add to $G_1$ a vertex $v_6$ with the same neighborhood as $v_4$, so obtaining graph $G'_1$ in Figure~\ref{x}(b) that is also a PCG in view of Property \ref{prop:closure_pcg}.2. Let $G'_1=PCG(T_1, I_1)$.

Analogously, consider the PCG $G_2$ in Figure~\ref{x}(c).
We add to $G_2$ a vertex $v_7$ with the same neighborhood as $v_3$, so obtaining PCG $G'_2$ in Figure~\ref{x}(d).
Let $G'_2=PCG(T_2, I_2)$.

Consider now \AND{2}$(T_1, T_2,I_1, I_2)$; this graph is exactly $G$ (differences between $G'_1$ and $G'_2$ are dotted in Figures~\ref{x}(b) and \ref{x}(d)). Finally, notice that neither $G'_1$ nor $G'_2$ is an interval graph as interval graphs are chordal whereas both $G'_1$ and $G'_2$ contain a chordless cycle (see for example the cycles $v_1,v_2,v_7,v_8$ in $G'_1$ and $G'_2$).
\end{proof}

\begin{figure}[h]
\centering
    \includegraphics[scale=0.6]{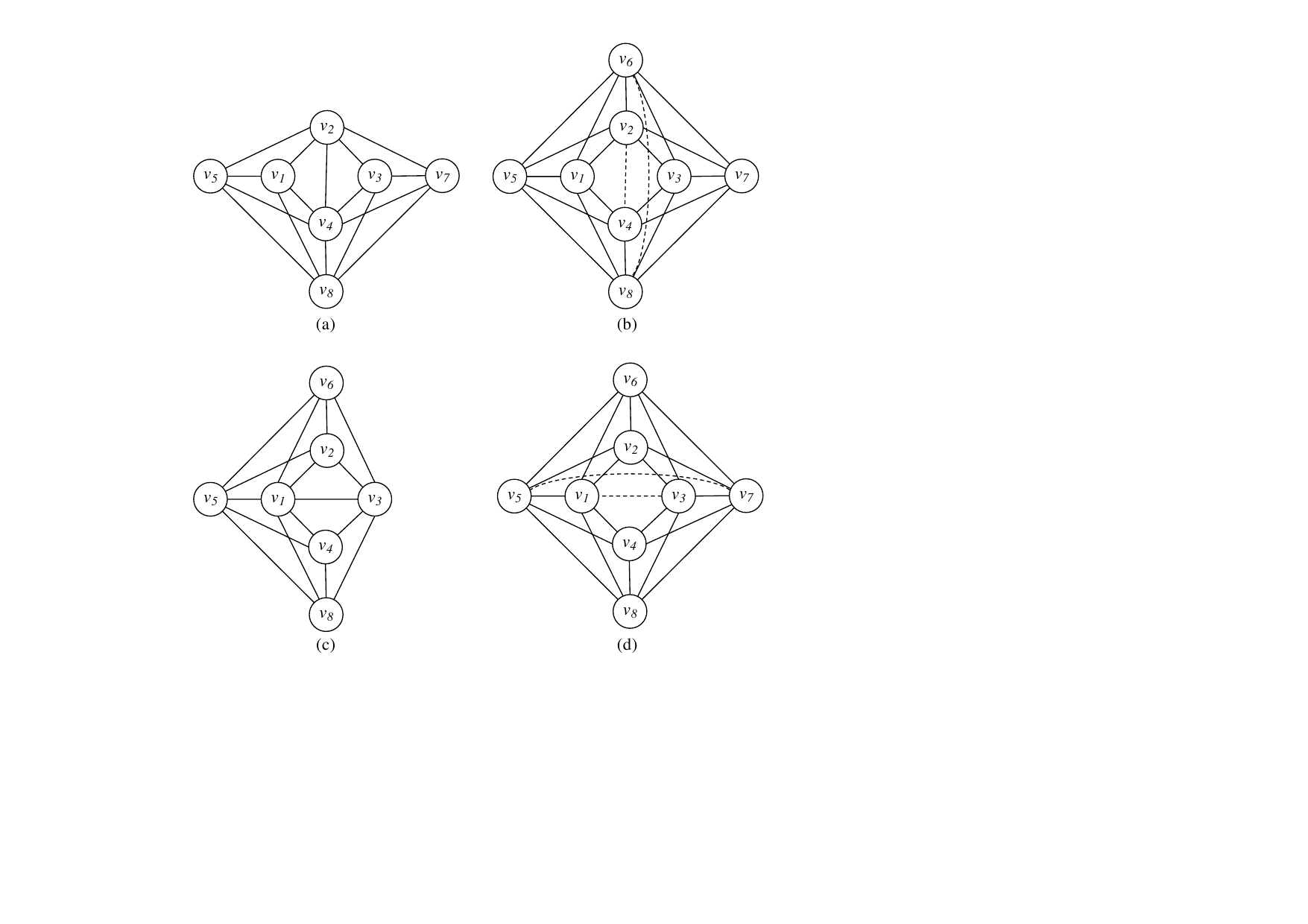}
    \caption{(a) Graph $G_1$ (b), Graph $G'_1$ (c), Graph $G_2$, (d) Graph $G'_2$}\label{x}
  \end{figure}

Besides the graph in Figure~\ref{y}(a), it is possible to find many other  graphs that are not PCGs but are in \AND{2}, as shown in the following.

We say that a graph $G$ is {\em maximal non-PCG} if $G$ is not PCG but the addition of any  edge produces a PCG. Notice that these graphs exist: for any graph $G$ that is not a PCG, either $G$ is maximal non-PCG, and we are done, or there exists an edge we could add and such that the graph remains outside the PCG class; iterating this process and noticing that the complete graph is a PCG, we eventually reach a maximal non-PCG.

\begin{theorem}
Any maximal non-PCG is a  \AND{2}.
\end{theorem}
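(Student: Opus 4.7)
The plan is to exploit the maximality hypothesis in a very direct way: if $G$ is a maximal non-PCG, then adding any single missing edge produces a PCG, so we have a large family of PCGs that are ``one edge above'' $G$. The intersection of any two of these will recover exactly $G$, giving the desired decomposition into two PCGs. The only point that needs a brief check is that we actually have at least two missing edges to work with.

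First I would argue that $G$ has at least two non-edges. If $G$ had no non-edges, then $G=K_n$, which is trivially a PCG (a star with all unit weights and any interval containing $2$), contradicting the hypothesis that $G$ is not a PCG. If $G$ had exactly one non-edge $\{u,v\}$, then $G=K_n-\{u,v\}$, and this graph is a PCG: take a star with center $c$ and leaves $V(G)$, give the edges $(c,u)$ and $(c,v)$ weight $2$ and every other edge weight $1$, and choose the interval $[2,3]$. Then $d_T(u,v)=4$ is excluded while every other pairwise distance equals $2$ or $3$, so one recovers $K_n-\{u,v\}$; this again contradicts the non-PCG assumption. Hence $G$ admits at least two distinct non-edges.

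Next, pick any two distinct non-edges $e_1,e_2$ of $G$. By the maximality assumption, both $G_1:=G+e_1$ and $G_2:=G+e_2$ are PCGs; let $G_1=PCG(T_1,I_1)$ and $G_2=PCG(T_2,I_2)$. Then
\[
E(G_1)\cap E(G_2)=\bigl(E(G)\cup\{e_1\}\bigr)\cap\bigl(E(G)\cup\{e_2\}\bigr)=E(G),
\]
because $e_1\ne e_2$ ensures that neither extra edge survives the intersection. By Definition~\ref{def:second-def-AND} this means $G=\AND{2}(G_1,G_2)$, so $G$ is in \AND{2}.

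The argument is short and the only genuine obstacle is the tiny ``one-missing-edge'' case, which is why I devoted a few lines to exhibiting an explicit PCG realization of $K_n-e$; everything else is a one-line set-theoretic identity combined with the definition of maximality. Nothing here requires the tree structure to be reconciled across $T_1$ and $T_2$, since the \AND{} definition allows the two PCGs to come from entirely unrelated trees and intervals.
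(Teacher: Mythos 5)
Your proof is correct and follows essentially the same route as the paper: establish that a maximal non-PCG has at least two non-edges, add each of two distinct non-edges to obtain PCGs $G_1$ and $G_2$ by maximality, and observe that their intersection is $G$. The only cosmetic difference is that you give an explicit star realization of $K_n-e$ where the paper cites the fact that graphs with acyclic complement are PCGs; both handle that corner case correctly.
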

\begin{proof}
Preliminarily, observe that a maximal non-PCG $G$ has at least 2 non-edges. 
In fact, a complete graph without a single edge is a PCG because a graph whose complement is acyclic is a PCG \cite{Hossain_2017}.
So, let $e_1, e_2 \not \in E(G)$. 
As $G$ is a maximal non-PCG, the 
graphs 
%$G_1=G \cup \{ e_1 \}$ and $G_2=G \cup \{ e_2 \}$ 
$G_1=(V(G), E(G) \cup \{ e_1 \})$ and $G_2=(V(G), E(G) \cup \{ e_2 \})$ are two PCGs and their intersection is $G$.
\end{proof}

Notice that from \cite{BCMP19} a graph is a \emph{minimal non-PCG} if it is not PCG and removing any edge gives a PCG.  It is worth noting that there are graphs that are both maximal and minimal non-PCGs and the graph $G$ in Figure~\ref{y}(a) provides such an example. Indeed, in \cite{Azam2021} it is showed that there are exactly seven graphs with eight vertices that are not PCGs and  their list is provided. It is not difficult to see that any graph $G'$ obtained by removing or adding an arbitrary edge from $G$, is not in this list, and thus $G'$ is a PCG. Hence, we deduce that $G$ is both minimal and maximal.

We now show an example of a graph that is not in \AND{2}.  

\begin{lemma}\label{lem:k4-4}
Let $H$ be the graph consisting of three disjoint copies of $K_{4,4}$. If the edges of $H$ are colored with two colors then there exist  two disjoint monochromatic chordless cycles of the same color. Moreover, these cycles belong to different  copies of $K_{4,4}$.
\end{lemma}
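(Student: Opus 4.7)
The plan is to combine a pigeonhole argument over the three copies of $K_{4,4}$ with the elementary fact that any graph containing a cycle contains a chordless (induced) one.

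First, I would fix an arbitrary $2$-coloring of $E(H)$ with colors red and blue, and consider each copy of $K_{4,4}$ separately. Since each copy has $16$ edges, in that copy at least $8$ edges must share a color; call this the \emph{dominant} color of the copy. With three copies and only two possible dominant colors, pigeonhole guarantees that at least two copies share the same dominant color, say red.

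Next, I would argue that in each of these two copies the red subgraph admits a chordless cycle, read inside the monochromatic subgraph (no chord of the cycle is itself red). The red subgraph has at least $8$ edges on $8$ vertices, so it cannot be a forest and must contain a cycle; a shortest such cycle is automatically chordless in the red subgraph, since any red chord would yield a strictly shorter red cycle. Because the two copies are vertex-disjoint to begin with, the two resulting cycles are vertex-disjoint, monochromatic in the same color, and chordless in their respective monochromatic subgraphs, delivering the conclusion.

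The main obstacle here is essentially definitional rather than combinatorial: one must read ``chordless'' inside the monochromatic subgraph, not inside $H$. Under the alternative reading (chordless in $H$, i.e.\ an induced $C_4$ of $K_{4,4}$), the lemma would actually be false, because $K_{4,4}$ admits $2$-colorings with no monochromatic $C_4$---for instance, the coloring in which the red class forms an $8$-cycle through all eight vertices and the blue class is its bipartite complement, which is itself an $8$-cycle. Once the correct reading is pinned down, the argument reduces to a one-line edge count, the ``shortest cycle is induced'' observation, and pigeonhole.
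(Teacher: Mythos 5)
Your proof is correct and follows essentially the same route as the paper's: a counting argument showing that in each copy of $K_{4,4}$ some color class has at least $8$ of the $16$ edges, hence is not a forest on $8$ vertices and contains a cycle whose shortest representative is chordless within that color class, combined with pigeonhole over the three copies (you apply the pigeonhole before extracting the cycles, the paper applies it after, which is immaterial). Your clarification that ``chordless'' must be read inside the monochromatic subgraph is exactly the reading the paper's proof implicitly adopts, so no gap there.
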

\begin{proof}
Let the edges of $H$ be colored in some way with colors green and blue.

We show first that if the edges of $K_{4,4}$ are colored with two colors then necessarily we have a monochromatic chordless cycle. Indeed, the two monochormatic subgraphs induced by each one of the two colors cannot be both forests as each forest on 8 vertices has at most 7 edges and thus two forests would cover only 14 edges instead of 16. Thus, without loss of generality, we assume that the red color induces a subgraph that contains at least one cycle and let $C$ be the smallest one. As we are considering subgraphs of a bipartite graph, $C$ cannot have length $3$, thus it has length at least $4$. Moreover, $C$ is chordless as if there was a chord we could find a cycle of length strictly smaller, which contradicts the hypothesis that $C$ is a cycle of smallest length.

Finally, the proof of the claim follows as $H$ contains 3 disjoint copies of $K_{4,4}$ and each one of them contains a monochromatic chordless cycle and thus two of these cycles  will necessarily  have the same color. 
\end{proof}

%In the next theorem we use the graph $G=\overline{H}$.

\begin{theorem}\label{theo:not_and}
Graph $\overline{H}$ is not a \AND{2}.
\end{theorem}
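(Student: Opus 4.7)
The plan is to argue by contradiction. Suppose $\overline{H}$ is a \AND{2}, so $\overline{H}=G_1\cap G_2$ for PCGs $G_1=PCG(T_1,[\alpha,\beta])$ and $G_2$. Every edge of $H$ must be missing from at least one of $G_1,G_2$, so we can $2$-color each $e\in E(H)$ with an index $i(e)\in\{1,2\}$ such that $e\in E(\overline{G_{i(e)}})$. Lemma~\ref{lem:k4-4} applied to this coloring produces two vertex-disjoint monochromatic chordless cycles lying in two different copies of $K_{4,4}$; because every chordless cycle inside $K_{4,4}$ has length exactly $4$ (any bipartite cycle of length $\geq 6$ spans at least a $K_{3,3}$, in which every $6$-cycle acquires a chord), both cycles are $4$-cycles, which we may assume receive color $1$ and denote $C'=x_1y_1x_2y_2$ and $C''=x_3y_3x_4y_4$.

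Next I read off the constraints these impose on $T_1$. The eight cycle edges lie in $\overline{G_1}$, so $d_{T_1}(x_i,y_j)\notin[\alpha,\beta]$ for $i,j\in\{1,2\}$ (and analogously for $C''$). The four chords $\{x_1,x_2\},\{y_1,y_2\},\{x_3,x_4\},\{y_3,y_4\}$ sit inside one part of their respective $K_{4,4}$, hence are edges of $\overline{H}\subseteq G_1$, giving $d_{T_1}(x_1,x_2),d_{T_1}(y_1,y_2),d_{T_1}(x_3,x_4),d_{T_1}(y_3,y_4)\in[\alpha,\beta]$. Finally $C'$ and $C''$ live in different components of $H$, so every pair $(u,v)$ with $u\in V(C')$ and $v\in V(C'')$ is an edge of $\overline{H}\subseteq G_1$, and all $16$ cross-distances also lie in $[\alpha,\beta]$.

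The crucial intermediate step is to show that some cycle-edge distance of $C'$ strictly exceeds $\beta$. Suppose instead that $d_{T_1}(x_i,y_j)<\alpha$ for all $i,j\in\{1,2\}$. In the subtree $T_{\{x_1,y_1,x_2\}}$ the path $P_{T_1}(x_1,x_2)$ is then the unique longest one, because $d_{T_1}(x_1,x_2)\geq\alpha$ while $d_{T_1}(x_1,y_1),d_{T_1}(x_2,y_1)<\alpha$. Applying Lemma~\ref{lem:three-leaves-subtree} with $u=x_1$, $v=x_2$, $w=y_1$ and fourth leaf $x=y_2$ forces $d_{T_1}(y_1,y_2)\leq\max\{d_{T_1}(y_2,x_1),d_{T_1}(y_2,x_2)\}<\alpha$, contradicting $d_{T_1}(y_1,y_2)\geq\alpha$. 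Hence there is a cycle edge $\{a,b\}$ of $C'$ with $d_{T_1}(a,b)>\beta$, and by the identical argument a cycle edge $\{c,d\}$ of $C''$ with $d_{T_1}(c,d)>\beta$.

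The final blow is another application of Lemma~\ref{lem:three-leaves-subtree}. In the subtree $T_{\{a,b,c\}}$ the path $P_{T_1}(a,b)$ is the unique longest, since $d_{T_1}(a,b)>\beta\geq d_{T_1}(a,c),d_{T_1}(b,c)$; taking fourth leaf $x=d$ yields $d_{T_1}(c,d)\leq\max\{d_{T_1}(d,a),d_{T_1}(d,b)\}\leq\beta$, contradicting $d_{T_1}(c,d)>\beta$. This refutes the initial assumption, so $\overline{H}\notin\AND{2}$. The main obstacle is the intermediate claim: one has to rule out the ``all short'' alternative for the cycle-edge distances of each $C_4$, and this is precisely where Lemma~\ref{lem:three-leaves-subtree} is first invoked to convert the combinatorial output of Lemma~\ref{lem:k4-4} into the metric contradiction that the second invocation then exploits.
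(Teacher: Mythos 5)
Your strategy is genuinely different from the paper's: rather than invoking the criterion of Hossain et al.\ (a graph whose complement contains two disjoint chordless cycles is not a PCG), you try to re-derive the contradiction directly from Lemma~\ref{lem:three-leaves-subtree}, first forcing a ``long'' edge on each cycle and then playing the two long edges against the short cross-component distances. For a pair of vertex-disjoint $4$-cycles lying in different copies of $K_{4,4}$ this metric argument is correct and self-contained, since every chord of a $C_4$ in $K_{4,4}$ is a within-side pair and hence an edge of $\overline{H}\subseteq G_1$.

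The gap is the step ``both cycles are $4$-cycles.'' Lemma~\ref{lem:k4-4} produces a shortest cycle of a monochromatic subgraph, which is chordless only \emph{within that colour class}, not within $K_{4,4}$; your parenthetical justification concerns cycles chordless in $K_{4,4}$, a strictly stronger property. The two notions genuinely diverge: colour the edges of $K_{4,4}$ according to a decomposition into two Hamiltonian $8$-cycles. Then neither colour class contains any $4$-cycle, and the cycles delivered by Lemma~\ref{lem:k4-4} have length $8$. For such a cycle $C'=x_1y_1x_2y_2x_3y_3x_4y_4$ your intermediate step collapses: the pair $\{x_1,y_2\}$ is now a chord of $C'$ that is an edge of $H$ carrying the other colour, so you only know it lies in $\overline{G_2}$ and cannot assert $d_{T_1}(x_1,y_2)\in[\alpha,\beta]$; the invocation of Lemma~\ref{lem:three-leaves-subtree} with fourth leaf $y_2$ then fails to push $d_{T_1}(y_1,y_2)$ below $\alpha$. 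Even if all chords of the long cycle did have distance in $[\alpha,\beta]$, a single application of the lemma no longer refutes the ``all cycle edges short'' alternative, so the repair is not routine. The paper sidesteps this entirely by citing the Hossain et al.\ criterion, which applies to disjoint chordless cycles of any length, and by noting that a monochromatic chordless cycle, after the remaining (black) edges of the relevant complement are added back, still contains some chordless cycle of length at least $4$ to which the criterion applies. To fix your proof you must either quote that criterion as the paper does, or extend your metric argument to chordless cycles of length $6$ and $8$.
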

\begin{proof}
Suppose on the contrary that $\overline{H}$ is a \AND{2}. 
From Definition~\ref{def:second-def-AND}, there exist $2$ graphs $G_1, G_2$ on the same vertex set such that $G_1 \in PCG$, $G_2 \in PCG$ and $E(\overline{H})= E(G_1) \cap E(G_2)$. 
We denote $E'= E(G_1) \setminus E(\overline{H})$ and $E''= E(G_2) \setminus E(\overline{H})$ with $E'$ and $E''$ possibly empty. 
Set $E'$ ($E''$) contains the edges we added to $\overline{H}$ in order to obtain a PCG $G_1$ ($G_2$). Clearly we  have $E' \cap E'' = \emptyset$, $E' \subseteq E(H)$ and $E'' \subseteq E(H)$. 

We can view the edges of $H$ as colored by three colors: the edges in $E'$ colored by red, the edges in $E''$ colored by blue and the edges of $H$ that are neither in $E'$ nor in $E''$  colored by black. 

We proceed to show that at least one between $G_1$ and $G_2$ is not a PCG, contradicting our initial hypothesis. 
To this purpose we use the result in \cite{Hossain_2017} which states that any graph whose complement has two disjoint chordless cycles (\textit{i.e.} two cycles that are vertex disjoint and for which there exists no edge connecting two vertices that belong to different cycles), is not a PCG.  
Notice that the complement of graph $G_1$ is exactly the subgraph of $H$ induced by the blue and black edges  and the complement of graph $G_2$ is the one induced by red and black edges. 
Thus, it is sufficient to prove that either the red-black subgraph or the blue-black subgraph contains two disjoint chordless cycles. 
We consider the red-black induced subgraph ({\em i.e.}, the complement of graph $G_2$) as colored by green. 
Thus we obtain a coloring of the edges of $H$ with two colors, green and blue, and we can use Lemma~\ref{lem:k4-4}, having that at least one of the followings hold:

(a) the green subgraph contains two disjoint  chordless cycles;

(b) the blue subgraph  contains two disjoint  chordless cycles.

\noindent
If case (a) holds we are done as the green subgraph corresponds to $\overline{G_2}$, implying the contradiction that $G_2$ is not a PCG. 

If case (b) holds, then  by adding the black edges to the blue subgraph (in order to obtain $\overline{G_1}$) we have two disjoint chordless cycles. 
Indeed, a chordless blue cycle plus some black edges will always contain, a chordless cycle of at least $4$ vertices as we are considering subgraphs of $H$ that is a bipartite graph.  
Moreover, the black edges cannot connect the two disjoint blue chordless cycles  as, from Lemma~\ref{lem:k4-4}, each one of them belongs to a different connected component of $H$. 
We then deduce that $G_1$ is not a PCG. In conclusion, $G$ is not a \AND{2}.
\end{proof}

\subsection*{Not all graphs are OR-PCGs or AND-PCGs}\label{subsec:notOrnotAnd}
Given the above results, it is natural to ask whether there is a constant $t$ such that all graphs are $t$-OR or $t$-AND PCGs.  We show that there is no such $t$ and that, in fact, we can strengthen this result by showing that not all graphs can be obtained from the OR (or AND) of $t$ \emph{$k$-interval} PCGs.
We say that a graph $G$ is a $t$-OR $k$-interval PCG if there exists a set of $k$-interval PCGs $G_1, \ldots, G_t$ such that $V(G) = V(G_1) = \ldots = V(G_t)$ and $E(G) = \cup_{i=1}^t E(G_i)$.  
Similarly, $G$ is a $t$-AND $k$-interval PCG if instead $E(G) = \cap_{i=1}^t E(G_i)$.  

It is not hard to show that a graph $G$ is \emph{not} a $t$-OR $k$-interval PCG if and only if, for every way of assigning each $e \in E(G)$ to a non-empty subset of colors in $\{1, \ldots, t\}$, there is some $c \in \{1, \ldots, t\}$ such that the edges that have color $c$ form a subgraph that is not a $k$-interval PCG (to see this, interpret giving color $c$ to $e \in E(G)$ as having $e \in E(G_c)$).
This shows that finding non $t$-OR $k$-interval PCGs is a Ramsey-type question where edges are colored by subsets, and where we want one of the colors to be outside of a specified graph class.  Even with this observation, it is not immediate that non $t$-OR $k$-interval PCGs exist.  We combine Ramsey theory with Theorem~\ref{theo:not-interval} for this purpose.

For a given bipartite graph $G$, we write $R \rightarrow_{\epsilon~ind} G$ and say that a bipartite graph $R$ has the \emph{induced $\epsilon$-density property} if every subgraph of $R$ with at least $\epsilon |E(R)|$ edges contains a copy of $G$ which is an induced subgraph of $R$.  That is, $G$ is an  induced subgraph of both the dense subgraph and $R$ itself.   
The following was shown in \cite[Theorem 1.10]{dudek2013some} (the notation $c = c(\epsilon)$ is used to indicate that the appropriate value of $c$ is a function of $\epsilon$, which is shown to exist but is not determined exactly by the authors).

\medskip 

\begin{theorem}[\cite{dudek2013some}]\label{thm:ramsey-induced}
For every $\epsilon$, $0 < \epsilon < 1$, there is a constant $c = c(\epsilon)$ such that for each $n$, there exists a bipartite graph $R$ with at most $2^{cn}$ vertices such that $R \rightarrow_{\epsilon~ind}
G$ for every bipartite graph $G = (V_1 \cup V_2, E)$ with $|V_1|, |V_2| \leq n$.
\end{theorem}

The above means that, in particular, any dense enough subgraph of $R$ contains every bipartite graph, up to a certain size, as an \emph{induced} subgraph (these also happen to be induced subgraphs of $R$, but we will not use this fact).
The proof of Theorem~\ref{thm:ramsey-induced} is constructive, and results in the bipartite graph $R = (V_1 \cup V_2, E)$ in which, putting $N = cn$ for a large enough $c = c(\epsilon)$,  we have $V_1 = N$, $V_2 = {N \choose {N/2}}$, and every vertex of $V_2$ is connected to all vertices of precisely one $N/2$-subset of $V_1$.  
Interestingly, the $R$ graph with $N = 4k-2$ is almost identical to the $H_k$ graph from Theorem~\ref{theo:not-interval}.

We will make use of the following.

\begin{lemma}\label{lem:and-to-or}
Let $G$ be a $t$-AND $k$-interval PCG.  Then $\overline{G}$ is a $t$-OR $(k + 1)$-interval PCG.
\end{lemma}

\begin{proof}
Since $G$ is a $t$-AND $k$-interval PCG, there are graphs $G_1, \ldots, G_t$ such that 
$E(G) = E(G_1) \cap \ldots \cap E(G_t)$.
Let $i \in \{1, \ldots, t\}$.  Then $G_i$ is a $k$-interval PCG and we can write $G_i = k\textnormal{-}PCG(T_i, I_{i, 1}, \ldots, I_{i, k})$, where $T_i$ is a tree and the $I_{i, j}$'s are intervals.  We denote $\I_i = I_{i, 1} \cup \ldots \cup I_{i, k}$.
Note that the set $\J_i = \mathbb{N} \setminus \I_i$ can be partitioned into $k + 1$ intervals, which we will denote $\{J_{i, 1}, \ldots, J_{i, k + 1}\}$. 
Define $G'_i = (k + 1)\textnormal{-}PCG(T_i, J_{i, 1}, \ldots, J_{i, k+1})$.  

Consider the graph $G'$ obtained from the OR of the $G'_i$ graphs, \textit{i.e.} $V(G') = V(G)$ and $E(G') = E(G'_1) \cup \ldots \cup E(G'_t)$.
We claim that $G' = \overline{G}$.
Let $\{u,v\} \in E(G)$.  Then, since $G$ is obtained from the AND of the $G_i$ graphs, for each $i \in \{1, \ldots, k\}$ we have $d_{T_i}(u, v) \in \I_i$, and thus $d_{T_i}(u, v) \notin \J_i$.  This implies that $\{u,v\} \notin E(G')$.
Let $\{u,v\} \notin E(G)$.  Then there exists $i \in \{1, \ldots, t\}$ such that $d_{T_i}(u, v) \notin \I_i$, and thus $d_{T_i}(u, v) \in \J_i$ and therefore $\{u,v\} \in E(G')$.  It follows that $G'$ is indeed the complement of $G$.
\end{proof}

\begin{theorem}
Let $t \geq 2, k \geq 1$ be integers.
Then there exist graphs that are not $t$-OR $k$-interval PCGs, and there exist graphs that are not $t$-AND $k$-interval PGCs.
\end{theorem}

\begin{proof}
Let us begin with $t$-OR $k$-interval PCGs.
As shown in~Theorem~\ref{theo:not-interval}, for any $k$ there exists a bipartite graph $H_k = (V_1 \cup V_2, E)$ with $|V_1|, |V_2| \leq n$ for some large enough $n$, such that $H_k$ is not a $k$-interval PCG.
Put $\epsilon = 1/t$.
By Theorem~\ref{thm:ramsey-induced}, there is a constant $c = c(\epsilon)$ and a bipartite graph $R$ of order $2^{cn}$ such that $R \rightarrow_{\epsilon~ind} H_k$.  

Towards a contradiction, assume that $R$ is a $t$-OR $k$-interval PCG, obtained by taking the union of the edges of $k$-interval PCGs $G_1, \ldots, G_t$.  
Because each edge of $R$ is in at least one $G_i$ graph, there is $i \in \{1, \ldots, t\}$ such that $|E(G_i)| \geq |E(R)|/t = \epsilon |E(R)|$.  Since $R \rightarrow_{\epsilon~ind} H_k$, $G_i$ contains a copy of $H_k$ as an induced subgraph.  Therefore, $G_i$ is not a $k$-interval PCG, a contradiction.  This shows that $R$ cannot be a $t$-OR $k$-interval PCG.

We now consider $t$-AND $k$-interval PCGs.
As we just argued, there exists a graph $R$ that is not a $t$-OR $(k + 1)$-interval PCG.  By taking the contrapositive of Lemma~\ref{lem:and-to-or}, we have that $\overline{R}$ is not a $t$-AND $k$-interval PCG.
\end{proof}

\section{Discussion and open problems}\label{sec:conclusion}
In this paper we defined two new classes of graphs: \OR{k}s and \AND{k}s, that are two different generalizations of PCGs.
We studied these classes and another already known generalization of PCGs, {\em i.e.}, \Int{k}s.
In particular, we showed that there is no constant $k$ for which the \Int{k} class contains all graphs; we provided upper bounds on the minimum $k$ for which arbitrary and special graphs belong to \OR{k} and to \AND{k} classes.
Finally, we showed that for any $k$, there exist graphs that are not \OR{k}s, and graphs that are not \AND{k}s. This work leads to numerous further challenging problems. We detail some of them below.

First, from a computational complexity point of view, nothing is known concerning \Intg s and our new graph classes. In fact,  it is not even known the computational complexity of deciding whether a graph $G$ is a PCG or not \cite{DMR15,Rahman2020}.

\begin{problem} Given a graph $G$ and an integer $k$, determine the exact complexity of deciding whether $G$ is in \OR{k}\ (or in \AND{k} or in \Int{k}). 
In particular, if $k$ is part of the input, is it NP-hard to decide membership in one of these classes?
 \end{problem}

From Lemma~\ref{lem:complement_interval}, if $G$ is a PCG then its complement $\overline{G}$ is a \Int{2} and thus a \OR{2}. The analogous question related to the \AND{2}\ class remains an open problem.
\begin{problem}
If $G$ is a PCG determine whether  $\overline{G}$ is  a \AND{2}.
\end{problem}

In Figure~\ref{y}.(a) we show a graph $G$ that is not a PCG. Notice that its complement, $\overline{G}$, consists of two disjoint copies of $C_4$ and thus  $\overline{G}$ is a PCG. Hence, the PCG class is not closed with respect to the complement. However, it is not known whether the same holds for the other generalizations of the PCGs. 

\begin{problem}
Determine whether the classes \Intg, \ORg\ and \ANDg\ are closed  with respect to the complement. 
%Can we have a result similar to Lemma~\ref{lem:complement_interval} for \ORg\ (\ANDg)? More specifically, given a graph $G$ in \OR{k}\ (\AND{k}), what is the minimum $k'$ such that $\overline{G}$ is in \OR{k'} (\AND{k'})?
\end{problem}
\noindent
In relation to the last problem with respect to the \Intg\ class, notice that the result of Lemma~\ref{lem:complement_interval} indicates only that if $G$ is a \Int{k}\ then $\overline{G}$ is a \Int{(k+1)}.

From Theorem~\ref{theo: and_complement_to_or} we know that if a graph $G$ is a \AND{k}\, its complement $\overline{G}$ is a \OR{2k}. The reverse case seems more difficult.

\begin{problem}
If $G$ is a \OR{k}\, what can we say about the value of $k'$ for which  $\overline{G}$ is a \AND{k'}?
\end{problem}

It is known that the smallest graph that is not a PCG has 8 vertices \cite{Calamoneri2012,DMR15}. 
From Theorem~\ref{theo:not-interval}, we have graph $H_3$ of 135 vertices that is not a \Int{2}. 
Similarly, from Theorem~\ref{theo:not_and}, we have a graph of 24 vertices that is not a \AND{2}. 

\begin{problem}
What is the smallest value of $n$ such that there exists an $n$ vertex
graph that is not a \Int{2} (\AND{2}, \OR{2})?
\end{problem}

Finally, we show that graph $H_k$ in Theorem~\ref{theo:not-interval} is not in \Int{(k-1)}, the following question remains open.
\begin{problem}
Determine the minimum $t \geq k$ such that $H_k$ belongs to \Int{t}.
\end{problem}

%\section*{Acknowledgments}
%T. Calamoneri and A. Monti were partially supported by Sapienza University of Rome, grants RM122181612C08BB and RM123188D7F7985D. 
%B. Sinaimeri  was supported by MUR PRIN Project EXPAND, grant number 2022TS4Y3N.
\nocite{*}
\bibliographystyle{abbrvnat}
% use the following instead if you encounter problems 
%\bibliographystyle{alpha}
\bibliography{reference}
\label{sec:biblio}

\end{document}